\definecolor{brightmaroon}{rgb}{0.76, 0.13, 0.28}
\newtheorem{theorem}{Theorem}[section] 
\newtheorem{proposition}[theorem]{Proposition}
\newtheorem{lemma}[theorem]{Lemma}
\newtheorem{corollary}[theorem]{Corollary} 
\theoremstyle{definition}
\newtheorem{definition}[theorem]{Definition} 
\newtheorem{example}[theorem]{Example} 
\newtheorem{examples}
[theorem]{Examples} 
\newtheorem{remark}[theorem]{Remark}  
\newtheorem{notation}[theorem]{Notation}       
\newcommand{\inv}{^{\raisebox{.2ex}{$\scriptscriptstyle -1$}}}   
\begin{document} 

\title{Ideal spaces: An extension of structure spaces of rings}    
   
\author{Themba Dube}
\address{[1] Department of Mathematical Sciences, University of South Africa, P.O. Box 392, 0003 Pretoria, South Africa, [2] National Institute for Theoretical and Computational Sciences (NITheCS), South Africa.}
\email{dubeta@unisa.ac.za}

\author{Amartya Goswami} 
\address{[1] Department of Mathematics and Applied Mathematics,
University of Johannesburg, 
P.O. Box 524, 
Auckland Park 
2006, 
South Africa, [2] National Institute for Theoretical and Computational Sciences (NITheCS), South Africa.}

\email{agoswami@uj.ac.za}

\subjclass{13C05, 54B35, 13A15} 




\keywords{ideals; hull-kernel topology; Zariski topology; closed subbase; structure space;  irreducibility; sober space}

 \maketitle
   
\begin{abstract}
Can there be a structure space-type theory for an arbitrary class of ideals of a ring? The ideal spaces introduced in this paper allows such a study and our theory includes (but not restricted to) prime, maximal, minimal prime, strongly  irreducible, irreducible, completely irreducible, proper, minimal, primary, nil, nilpotent, regular, radical, principal, finitely generated ideals. We characterise ideal spaces that are sober. We introduce the notion of  a strongly disconnected spaces and show that for a ring with zero Jacobson radical, strongly disconnected ideal spaces containing all maximal ideals of the ring imply existence of non-trivial idempotent  elements in the ring. We also give a sufficient condition for a spectrum to be connected. 
\end{abstract}

\section*{Introduction}

Stone introduced \cite{S37} a topology on the set of prime ideals of a Boolean ring. That topology is induced by a (Kuratowski) closure operator defined in terms of intersection  and inclusion relations  among ideals of the Boolean ring. These two relations respectively give kernel and hull maps (see Definition \ref{hkm}). The topology so obtained is now known as the Stone topology or the hull-kernel topology. In the special cases of rings of continuous functions and commutative normed rings, it has also been introduced on maximal ideals by Gelfand and Kolmogoroff  \cite{GK39} (see also \cite{GJ60,S39}) and by Gelfand and \v{S}ilov \cite{GS41} respectively. For the algebra of all continuous complex-valued functions, Loomis \cite{L53} considered the same topology on maximal ideals. 

Jacobson \cite{J45} (see also \cite{J56}) showed that the set of primitive ideals of an arbitrary ring may be  endowed with a hull-kernel topology and he called the corresponding topological space the structure space of the ring. It was observed by McCoy \cite{M49} that the same topology may be considered on the set of generalised prime ideals defined therein.   
For the construction of an affine scheme,  Grothendieck \cite{G60} considered it on prime ideals of a commutative ring, and called it the Zariski topology or the spectral topology. In \cite{HJ65} (see also \cite{H71}), Henriksen and Jerison  studied the hull-kernel topology on minimal prime ideals of a commutative ring. In a more recent paper, Azizi \cite{A08} studied strongly irreducible ideals of a commutative ring endowed with the hull-kernel topology.  A study of topologies on ideals and proper ideals of an $R$-module can be found in \cite{FFS16}, whereas primary ideals and principal ideals with constructible topologies have been studied in \cite{FS20}. In \cite{O16}, it has been shown that with the involvement of Zariski topology, every ideal has an irredundant representation in terms of irreducible ideals. Apart from studying properties of a hull-kernel topology on various settings as mentioned above, Hochster \cite{H67} (see also \cite{H69, DST19})  proved that the structure space of prime ideals of a commutative ring is spectral. 

In order to `unify' all the above-mentioned works, in a general sense we may say that a structure space is a topological space whose underlying set is a class of ideals endowed with a hull-kernel topology. Moreover, we observe that irrespective of the types of rings, the choices of classes of ideals must be such that we can define hull-kernel topologies on them. McKnight \cite{M53} characterised such classes of ideals, and a necessary and sufficient condition (\ref{mic}) that a class of ideals must satisfy is nothing but a restricted version of the definition of a strongly irreducible ideal (see Theorem \ref{hkt}). 
 
This paper is motivated by the possibility of extending to all classes of ideals some of the results obtained for structure spaces. Before we elaborate on this point, let us first have a working terminology. By a \emph{spectrum} of $R$, we mean a class of same  `type' of ideals.
Here are examples of some well-known spectra: prime ideals ($\mathrm{Spec}(R)$),  
maximal ideals ($\mathrm{Max}(R)$), proper ideals 
($\mathrm{Prp}(R)$),  radical ideals 
($\mathrm{Rad}(R)$),  minimal ideals
($\mathrm{Min}(R)$),  
minimal prime ideals ($\mathrm{Spn}(R)$), primary ideals
($\mathrm{Prm}(R)$), 
nil ideals
($\mathrm{Nil}(R)$),  nilpotent ideals
($\mathrm{Nip}(R)$),  irreducible ideals
($\mathrm{Irr}(R)$),   completely irreducible ideals
($\mathrm{Irc}(R)$), principal ideals
($\mathrm{Prn}(R)$),  regular ideals
($\mathrm{Reg}(R)$),  finitely generated ideals
($\mathrm{Fgn}(R)$), strongly  irreducible ideals ($\mathrm{Irs}(R)$). We use the symbol $\mathrm{X}(R)$ to denote an arbitrary spectrum of a ring $R$, and for all spectra, we assume $R\notin \mathrm{X}(R).$    By a \emph{sub-spectrum} of a spectrum $\mathrm{X}(R)$, we mean a spectrum that is also a subset of $\mathrm{X}(R)$. 

\begin{figure}[ht!]  
\begin{center}  
$    
\xymatrix@R=15pt@C=12pt{
&*+[F-:<3pt>]\txt{\footnotesize $\mathrm{X}(R)$}\ar@{-}[dl]\ar@{-}[dr]\\*+[F-:<3pt>]\txt{\footnotesize $\mathrm{Irs}(R)$\\\footnotesize[\eqref{mic}  holds]}\ar@{-}[d] &&*+[F-:<3pt>]\txt{\footnotesize$\mathrm{Prp}(R)$\\\footnotesize[\eqref{mic} does not hold]}\ar@{-}[d] \\ 
*+[F-:<3pt>]\txt{\footnotesize$\mathrm{Spec}(R)$, $\mathrm{Spn}(R)$, $\mathrm{Max}(R)$, \textit{etc}.}&&*+[F-:<3pt>]\txt{\footnotesize $\mathrm{Rad}(R)$, $\mathrm{Prn}(R)$, $\mathrm{Min}(R)$,\\ \footnotesize $\mathrm{Fgn}(R)$, $\mathrm{Irr}(R)$, $\mathrm{Prm}(R)$,  \textit{etc}.}  
 }
$  
\end{center} 
\caption{Classification of  $\mathrm{X}(R)$}
\label{fig:cxr}
\end{figure}
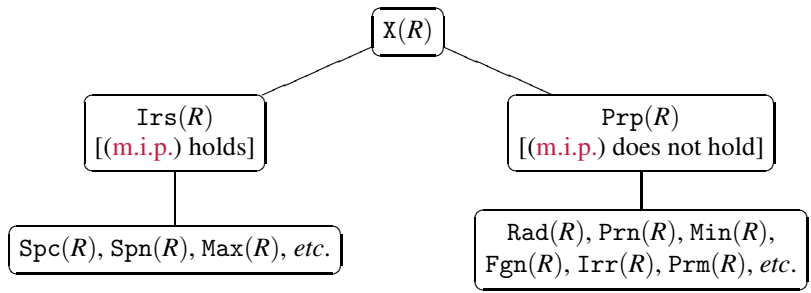  

Now $\mathrm{Irs}(R)$ is the `largest' spectrum on which we can define a hull-kernel topology and hence study structure spaces. The other examples are three of its sub-spectra (see Figure \ref{fig:cxr}). But that is no longer true for $\mathrm{Prp}(R)$ or for its sub-spectra (see Figure  \ref{fig:cxr}) simply  because for them  (\ref{mic}) does not hold (see Examples \ref{nkc1}, \ref{nkc2}, and  \ref{nkc3}). The way out for them is to generate a  ``hull-kernel-type''  topology using a closed subbase (see  Definition
\ref{sbt}). This closed-subbase topology is also known as a coarse lower topology (see \cite[A.8, p.\,589]{DST19}) or a lower topology (see \cite[Definition O-5.4, p.\,43]{G et. al.}). The motive for considering this topology is twofold. On one hand, hull and kernel maps are also involved in the construction of the closed-subbase topology as they do for hull-kernel topology; on the other hand, this closed-subbase topology coincides with  the hull-kernel topology when a spectrum $\mathrm{X}(R)$ satisfies (\ref{mic}). We call a spectrum $\mathrm{X}(R)$ endowed with the above mentioned closed-subbase topology an ideal space (see Definition \ref{sbt}). Therefore, we may see an ideal space as an extension of the notion of a structure space.    
 
To have a unified approach to study ideal spaces of all spectra, throughout the paper, we use subbasic closed sets in  statements and proofs. Whenever a statement or a proof of a result of an ideal space differs from that of a structure space, we highlight them.   
It is evident that many of the results of this paper can be extended to more general structures (\textit{e.g.} multiplicative lattices)  than rings. However, for simplicity of exposition and to be accessible to a wider audience, we have chosen to retain the ring terminology.   

Let us note the conventions of terminology in this paper. All rings considered in this paper are assumed to be commutative and to possess an identity element; all the ring homomorphisms  are  assumed to map identity element to identity  element. The notation and the terminology are largely the standard ones. Thus, for instance, throughout this paper, $R$ will denote a ring. By a \emph{non-trivial} idempotent element in a ring we mean an  idempotent element not equal to $0$ or $1.$ We denote the set of all ideals of a ring $R$ by $\mathrm{Idl}(R)$ and by the notation $\mathfrak{o},$ we denote the zero ideal of $R$; whereas  we denote the radical of an ideal $\mathfrak{a}$ by $\sqrt{\mathfrak{a}}.$ By a semi-simple ring $R$, we mean the Jacobson radical of $R$ is the zero ideal. The symbols $\emptyset$ and $\mathcal{P}\!(S)$ respectively denote the empty set and the powerset of $S$.   
An ideal $\mathfrak{a}$ is called \emph{proper} if $\mathfrak{a}$ is not equal to $R$. If $S$ is a subset of $R,$ the ideal generated by $S$ is denoted by $\langle S\rangle.$ 

We describe the contents of the paper briefly.
Section \ref{sec2} is dedicated to study the properties of hull and kernel maps. 
Section \ref{basb} is devoted to structure spaces, where in Theorem \ref{hkt}, we see the characterisation of spectra for structure spaces. Moreover, Theorem \ref{kclcb} characterises the ``hull-kernel closure operator'' in terms of a closed base. Section \ref{idsp} starts with examples of spectra on which we cannot define hull-kernel topologies, and hence motivates for the closed-subbase topology on them. In the following three subsections we study various topological properties of ideal spaces. Canonical continuous maps and homeomorphisms between ideal spaces are studied in \ref{conti}, whereas \ref{sepax}, is about compactness and separation axioms. In particular, we characterise (Theorem \ref{sob}) ideal spaces that are sober.    
Finally, in \ref{comconn}, we discuss connectedness. We introduce (Definition \ref{sconn}) the notion of a strongly connected space and show its relation with  connectedness. We also give a sufficient condition for a spectrum to be connected (Theorem \ref{conis}).  

\section{Hull and Kernel maps}\label{sec2}

Before we discuss about structure spaces (see Section \ref{basb}) and extend them to ideal spaces (see Section \ref{idsp}), we first study properties of those two maps that induce the corresponding topologies. 
 
\begin{definition}\label{hkm} 
 A \emph{hull} map $\mathrm{h}\colon \mathrm{Idl}(R)\to \mathcal{P}\!\!\left(\mathrm{X}(R)\right)$ is defined by $\mathrm{h}(\mathfrak{a})=\{\mathfrak{s}\in \mathrm{X}(R)\mid \mathfrak{a} \subseteq \mathfrak{s}\}$. A \emph{kernel} map $\mathrm{k}\colon \mathcal{P}\!\!\left(\mathrm{X}(R)\right)\to \mathrm{Idl}(R)$ is defined by
$\mathrm{k}(S)=\cap \,S=\cap\{\mathfrak{a}\mid \mathfrak{a}\in S \}.$ 
\end{definition}

In what follows, and throughout the paper, we view $\mathrm{Idl}(R)$ and all its subsets as a poset, partially ordered by inclusion.
Similarly, we view $\mathcal{P}\!\!\left(\mathrm{X}(R)\right)$ as a poset (of course, a Boolean algebra) partially ordered by inclusion. Note then that the hull and kernel maps are order-reversing maps. So the Galois connection referred to in Proposition \ref{muls} is to be understood in this context, so that it is what is usually referred to as an antitone Galois connection.

\begin{proposition}\label{muls}

 For any spectrum $\mathrm{X}(R),$ the maps $\mathrm{h}$ and $\mathrm{k}$ have the following properties.   
 
\begin{enumerate}[\upshape (i)]

\item\label{gcac} The pair $(\mathrm{h}, \mathrm{k})$ of maps forms a Galois connection and $\mathrm{hk}$ is an algebraic closure operation on $\mathrm{X}(R).$  
  
\item \label{hkd} The maps $\mathrm{h}$ and $\mathrm{k}$ are order-reversing.  Moreover, $\mathrm{h}(R)=\emptyset,$ $\mathrm{h}(\mathfrak{o})=\mathrm{X}(R),$ and $\mathrm{k}(\emptyset)=R$. 
 
\item \label{ihs} For $\mathfrak{a},$ $\mathfrak{b}\in  \mathrm{Idl}(R)$, $\mathrm{h}(\mathfrak{a})\cup \mathrm{h}(\mathfrak{b}) \subseteq \mathrm{h}(\mathfrak{a}\cap \mathfrak{b})\subseteq \mathrm{h}(\mathfrak{ab}).$

\item\label{insum} For a family $\{ \mathrm{h}(\mathfrak{a}_{\alpha}) \}_{\alpha \in \Lambda},$ $\bigcap_{\alpha \in \Lambda}\mathrm{h}(\mathfrak{a}_{\alpha})=\mathrm{h}\left( \sum_{\alpha \in \Lambda}\mathfrak{a}_{\alpha} \right)$.   

\item\label{unin} For a family $\{ S_{ \lambda} \}_{\lambda \in \Gamma},$  $\mathrm{k}\left( \bigcup_{\lambda \in \Gamma}S_{ \lambda} \right)=\bigcap_{\lambda \in \Gamma} \mathrm{k}\left(S_{ \lambda}\right)$. 
\item\label{arad} For $\mathfrak{a}\in \mathrm{Idl}(R),$ $\mathrm{h}(\mathfrak{a})\supseteq\mathrm{h}(\sqrt{\mathfrak{a}}).$
\end{enumerate}
\end{proposition}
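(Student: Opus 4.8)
The plan is to treat part~(i) as the structural backbone and to read off, or directly check, the other five items from the definitions of $\mathtt{h}$ and $\mathtt{k}$ together with their order-reversing behaviour. Apart from part~(i), every assertion is essentially a one-line set-theoretic computation, so the only real work is to arrange them so that each reuses what precedes it.

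For part~(i) the single genuine step is the defining equivalence of an antitone Galois connection. First I would verify, for every $\mathfrak{a}\in\mathtt{Idl}(R)$ and every $S\in\mathcal{P}\!\left(\mathtt{X}(R)\right)$, that
\[
\mathfrak{a}\subseteq\mathtt{k}(S)\iff S\subseteq\mathtt{h}(\mathfrak{a}),
\]
since each side unwinds to the same condition, namely ``$\mathfrak{a}\subseteq\mathfrak{s}$ for every $\mathfrak{s}\in S$'': the left-hand side because $\mathtt{k}(S)=\bigcap S$, and the right-hand side by the definition of $\mathtt{h}$. This is exactly the adjunction property of the antitone pair $(\mathtt{h},\mathtt{k})$. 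Once it is available, the fact that the composite $\mathtt{hk}$ is a closure operation on $\mathcal{P}\!\left(\mathtt{X}(R)\right)$ --- extensive, monotone and idempotent --- is the standard consequence of the theory of Galois connections, and I would invoke that rather than re-derive the three closure axioms by hand.

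The remaining parts I would dispatch in turn. In part~(ii), order-reversal is immediate (a larger ideal has a smaller hull, and a larger subset has a smaller intersection), while the three boundary values come straight from the conventions: $\mathtt{h}(R)=\emptyset$ because no member of $\mathtt{X}(R)$ can contain $R$ (recall the standing assumption $R\notin\mathtt{X}(R)$), $\mathtt{h}(\mathfrak{o})=\mathtt{X}(R)$ because every ideal contains $\mathfrak{o}$, and $\mathtt{k}(\emptyset)=\bigcap\emptyset=R$ by the empty-intersection convention. For part~(iii), the first inclusion follows from $\mathfrak{a}\cap\mathfrak{b}\subseteq\mathfrak{a}$ and $\mathfrak{a}\cap\mathfrak{b}\subseteq\mathfrak{b}$ with order-reversal, and the second from $\mathfrak{ab}\subseteq\mathfrak{a}\cap\mathfrak{b}$. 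Parts~(iv) and~(v) are each a chain of equivalences: $\mathfrak{s}\in\bigcap_{\alpha}\mathtt{h}(\mathfrak{a}_{\alpha})$ iff $\mathfrak{a}_{\alpha}\subseteq\mathfrak{s}$ for all $\alpha$ iff $\sum_{\alpha}\mathfrak{a}_{\alpha}\subseteq\mathfrak{s}$ (using that $\sum_{\alpha}\mathfrak{a}_{\alpha}$ is the least ideal containing every $\mathfrak{a}_{\alpha}$), and dually $r\in\mathtt{k}\!\left(\bigcup_{\lambda}S_{\lambda}\right)$ iff $r$ belongs to every ideal in $\bigcup_{\lambda}S_{\lambda}$ iff $r\in\bigcap_{\lambda}\mathtt{k}(S_{\lambda})$. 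Finally part~(vi) is immediate from $\mathfrak{a}\subseteq\sqrt{\mathfrak{a}}$ and order-reversal.

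I do not expect a genuine obstacle anywhere. The only place that calls for a little care is the adjective \emph{algebraic} in part~(i): one should make clear that $\mathtt{hk}$ is meant as the closure operation canonically attached to the Galois connection, and resist asserting the finitary (finite-character) property, which in general fails here. For example, with $R=\mathbb{Z}$ and $\mathtt{X}(R)=\mathtt{Spc}(\mathbb{Z})$, the closure of the set of all nonzero prime ideals contains $\mathfrak{o}$, yet $\mathfrak{o}$ lies in the closure of no finite subfamily. Beyond this caveat the whole proposition is bookkeeping, with the two conventions $\bigcap\emptyset=R$ and $R\notin\mathtt{X}(R)$ being the only facts one must be careful not to overlook.
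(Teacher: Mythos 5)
Your argument is essentially the paper's own: the paper likewise proves only the adjunction $S\subseteq\mathtt{h}(\mathfrak{a})\Longleftrightarrow \mathfrak{a}\subseteq\mathtt{k}(S)$, declares (ii)--(vi) straightforward (the computations it sketches for them are the same one-liners you give), and deduces the closure-operation claim in (i) from the general theory of Galois connections, just as you do.

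The one point of genuine divergence is your caveat about the word \emph{algebraic}, and there you are correcting the paper rather than deviating from it. The paper's justification reads: ``Since every Galois connection induces an algebraic closure operation, the same holds for $\mathtt{hk}$.'' If ``algebraic'' is given its standard meaning (finite character), this is false: a Galois connection always induces a closure operation, but not in general a finitary one, and your counterexample is correct. In $\mathtt{Spc}(\mathds{Z})$, with $S$ the set of all nonzero prime ideals, $\mathtt{k}(S)=\mathfrak{o}$, hence $\mathtt{hk}(S)=\mathtt{h}(\mathfrak{o})=\mathtt{Spc}(\mathds{Z})\ni\mathfrak{o}$; but for any finite subfamily $F=\{p_{1}\mathds{Z},\dots,p_{n}\mathds{Z}\}$ one has $\mathtt{k}(F)=p_{1}\cdots p_{n}\mathds{Z}\neq\mathfrak{o}$ and $\mathtt{hk}(F)=F$, which does not contain $\mathfrak{o}$. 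So $\mathtt{hk}$ fails finite character on this spectrum, and the adjective ``algebraic'' in (i) should either be dropped or be explicitly understood as meaning only ``the closure operation canonically attached to the Galois connection,'' exactly as you propose. This costs nothing downstream: the subsequent results of the paper use only the closure and Kuratowski properties of $\mathtt{hk}$, never finiteness of character.
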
  
    
\begin{proof}  
The proofs of (\ref{hkd})--(\ref{arad}) are straightforward. We prove only (\ref{gcac}). To show that the pair $(\mathrm{h}, \mathrm{k})$ forms a Galois connection, it is sufficient to show $S \subseteq \mathrm{h}(\mathfrak{a})$ if and only if $\mathrm{k}(S)\supseteq \mathfrak{a},$ for all $S\in \mathcal{P}\!(\mathrm{X}(R))$ and for all $\mathfrak{a}\in \mathrm{Idl}(R).$ Suppose $S \subseteq \mathrm{h}(\mathfrak{a})$ and $x\in \mathfrak{a}.$  Then $x\in \mathfrak{a}\subseteq \mathfrak{s}$ for all $\mathfrak{s}\in S,$ and that implies $x\in \cap
S=\mathrm{k}(S).$ Conversely, $\mathrm{k}(S)\supseteq \mathfrak{a}$ implies $\mathfrak{a}\subseteq \mathfrak{s}$ for all $\mathfrak{s} \in S,$ and that gives $S\subseteq \mathrm{h}(\mathfrak{a}).$ 
Since every Galois connection induces an algebraic closure operation, the same holds  for $\mathrm{hk}.$ 
\qedhere
\end{proof}   

We note that (\ref{insum}) and (\ref{unin}) could also be proved via the Galois connection established in (\ref{gcac}), as follows. Consider $\mathrm{h}$ as  a mapping $\mathrm{h}\colon \mathrm{Idl}(R)\to\mathcal P(\mathrm{X}(R))^{\rm op}$ and $\mathrm{k}$ as a mapping  $\mathrm{k}\colon \mathcal{P}(\mathrm{X}(R))^{\rm op}\to \mathrm{Idl}(R)$. Then $\mathrm h$ and $\mathrm{k}$ are order-preserving, and so, from the Galois connection, $\mathrm{h}$ preserves joins and $\mathrm{k}$ preserves meets. In $\mathrm{Idl}(R)$, joins  are sums  and meets are intersections. In $\mathcal{P}(\mathrm{X}(R))^{\rm op}$, joins are intersections and meets are unions. The results follow from from this. 
There are equalities in (\ref{ihs}) and in (\ref{arad}) for $\mathrm{Irs}(R)$ and  $\mathrm{Spec}(R)$ respectively. The following result characterises spectra for which equality holds in (\ref{arad}). 

\begin{proposition}\label{hrrad}
 The following are equivalent.
\begin{enumerate}[\upshape (i)]
\item \label{hahrada}
$\mathrm{h}(\mathfrak{a})=\mathrm{h}(\sqrt{\mathfrak{a}})$ for every $\mathfrak{a}\in \mathrm{Idl}(R)$.
\item
$\mathrm{h}(\mathfrak{a})=\mathrm{h}(\sqrt{\mathfrak{a}})$ for every $\mathfrak{a}\in \mathrm{X}(R)$.  
\item 
Every ideal in $\mathrm{X}(R)$ is a radical ideal. 
\end{enumerate}
\end{proposition}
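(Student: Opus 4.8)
The plan is to establish the cyclic chain of implications (i) $\Rightarrow$ (ii) $\Rightarrow$ (iii) $\Rightarrow$ (i). Since Proposition \ref{muls}(\ref{arad}) already supplies the inclusion $\mathtt{h}(\sqrt{\mathfrak{a}})\subseteq\mathtt{h}(\mathfrak{a})$ for every $\mathfrak{a}\in\mathtt{Idl}(R)$, in each step only the reverse inclusion (or the conversion to a radical statement) requires attention. The implication (i) $\Rightarrow$ (ii) is immediate, because $\mathtt{X}(R)\subseteq\mathtt{Idl}(R)$, so an equality asserted for all ideals restricts to an equality for the ideals lying in the spectrum.

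For (ii) $\Rightarrow$ (iii) I would fix $\mathfrak{a}\in\mathtt{X}(R)$ and evaluate the hypothesis at $\mathfrak{a}$ itself. Since $\mathfrak{a}\subseteq\mathfrak{a}$, by the definition of the hull map we have $\mathfrak{a}\in\mathtt{h}(\mathfrak{a})$, and the hypothesis $\mathtt{h}(\mathfrak{a})=\mathtt{h}(\sqrt{\mathfrak{a}})$ then gives $\mathfrak{a}\in\mathtt{h}(\sqrt{\mathfrak{a}})$. Unwinding this last membership yields $\sqrt{\mathfrak{a}}\subseteq\mathfrak{a}$; combined with the always-valid inclusion $\mathfrak{a}\subseteq\sqrt{\mathfrak{a}}$, this forces $\mathfrak{a}=\sqrt{\mathfrak{a}}$, so $\mathfrak{a}$ is radical.

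For (iii) $\Rightarrow$ (i) I would take an arbitrary $\mathfrak{a}\in\mathtt{Idl}(R)$ and prove the missing inclusion $\mathtt{h}(\mathfrak{a})\subseteq\mathtt{h}(\sqrt{\mathfrak{a}})$. Given $\mathfrak{s}\in\mathtt{h}(\mathfrak{a})$, that is $\mathfrak{s}\in\mathtt{X}(R)$ with $\mathfrak{a}\subseteq\mathfrak{s}$, monotonicity of the radical gives $\sqrt{\mathfrak{a}}\subseteq\sqrt{\mathfrak{s}}$; since $\mathfrak{s}$ is radical by hypothesis, $\sqrt{\mathfrak{s}}=\mathfrak{s}$, whence $\sqrt{\mathfrak{a}}\subseteq\mathfrak{s}$ and therefore $\mathfrak{s}\in\mathtt{h}(\sqrt{\mathfrak{a}})$. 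This closes the cycle.

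The argument is entirely elementary, so I do not anticipate a genuine obstacle. The only two points calling for a moment's care are the self-evaluation device in (ii) $\Rightarrow$ (iii)—recognising that an element $\mathfrak{a}$ of the spectrum is its own witness in $\mathtt{h}(\mathfrak{a})$, which is precisely what lets a statement about hulls be read off as a statement about the ideal being radical—and the appeal to monotonicity of the radical operation, namely that $\mathfrak{a}\subseteq\mathfrak{b}$ implies $\sqrt{\mathfrak{a}}\subseteq\sqrt{\mathfrak{b}}$, in (iii) $\Rightarrow$ (i).
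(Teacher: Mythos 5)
Your proof is correct and follows essentially the same route as the paper's: the same cyclic chain (i) $\Rightarrow$ (ii) $\Rightarrow$ (iii) $\Rightarrow$ (i), the same self-evaluation device $\mathfrak{a}\in\mathtt{h}(\mathfrak{a})$ for (ii) $\Rightarrow$ (iii), and the same monotonicity-of-radical argument for (iii) $\Rightarrow$ (i). The only cosmetic difference is that the paper splits (iii) $\Rightarrow$ (i) into the cases $\mathtt{h}(\mathfrak{a})=\emptyset$ and $\mathtt{h}(\mathfrak{a})\neq\emptyset$, whereas your element-wise argument handles the empty case vacuously.
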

 
\begin{proof}
(i)$\Rightarrow$(ii). This is immediate because $\mathrm{X}(R)\subseteq\mathrm{Idl}(R)$.
 
(ii)$\Rightarrow$(iii). Let $\mathfrak{a}\in\mathrm{X}(R)$. Then $\mathfrak{a}\in\mathrm{h}(\mathfrak{a})$, and so, by (ii), $\mathfrak{a}\in\mathrm{h}(\sqrt{\mathfrak{a}})$. But this implies $\sqrt{\mathfrak{a}}\subseteq \mathfrak{a}$, whence we have $\mathfrak{a}=\sqrt{\mathfrak{a}}$, showing that $\mathfrak{a}$ is a radical ideal. 

(iii)$\Rightarrow$(i). Let $\mathfrak{a}\in\mathrm{Idl}(R)$. If $\mathrm{h}(\mathfrak{a})=\emptyset$, then $\mathrm{h}(\sqrt{\mathfrak{a}})=\emptyset$ because $\mathrm{h}(\sqrt{\mathfrak{a}})\subseteq \mathrm{h}(\mathfrak{a})$, hence 	$\mathrm{h}(\mathfrak{a})=\mathrm{h}(\sqrt{\mathfrak{a}})$. If $\mathrm{h}(\mathfrak{a})\neq \emptyset$, consider  any $\mathfrak{b}\in\mathrm{h}(\mathfrak{a})$. Then $\mathfrak{b}\in\mathrm{X}(R)$, and is therefore a radical ideal, by (iii). Furthermore, $\mathfrak{a}\subseteq \mathfrak{b}$, which then implies that $\sqrt{\mathfrak{a}}\subseteq \sqrt{\mathfrak{b}}=\mathfrak{b}$, whence $\mathfrak{b}\in\mathrm{h}(\sqrt{\mathfrak{a}})$. Therefore, $\mathrm{h}(\mathfrak{a})\subseteq \mathrm{h}(\sqrt{\mathfrak{a}})$, and hence $\mathrm{h}(\mathfrak{a})=\mathrm{h}(\sqrt{\mathfrak{a}})$, as required.  
\end{proof}  

From Proposition \ref{hrrad}, it is clear that for all $\mathfrak{a}\in \mathrm{Idl}(R)$ the equality $\mathrm{h}(\mathfrak{a})=\mathrm{h}(\sqrt{\mathfrak{a}})$ holds only for the spectrum $\mathrm{Rad}(R)$ or its sub-spectra. It is natural to ask whether the notion of radical $\sqrt{\mathfrak{a}}$ of an ideal $\mathfrak{a}$ can be generalised in such a fashion that a similar identity as in Proposition \ref{hrrad}(\ref{hahrada}) still holds for all spectra. The following Definition \ref{grad} is an answer of that question. Moreover, it helps to derive a number of properties (see Proposition \ref{hrx}) for an arbitrary spectrum $\mathrm{X}(R)$ that specifically $\mathrm{Spec}(R)$ has.   

\begin{definition} \label{grad} 
 For a spectrum $\mathrm{X}(R)$ and for $\mathfrak{a}\in\mathrm{Idl}(R),$ we define the ideal $\sqrt[\mathrm{X}]{\mathfrak{a}}=\mathrm{kh}(\mathfrak{a}).$
\end{definition} 

\begin{proposition}\label{hrx} 
Suppose $\mathrm{X}(R)$ is
a spectrum.
\begin{enumerate}[\upshape (i)]
\label{rxlr}  
\item\label{axaa} For every $\mathfrak{a}\in \mathrm{Idl}(R),$ $\mathfrak{a}\subseteq \sqrt[\mathrm{X}]{\mathfrak{a}}.$

\item \label{axa}   
If $\mathfrak{a}\in\mathrm{X}(R),$ then $\mathfrak{a}=\sqrt[\mathrm{X}]{\mathfrak{a}}$.

\item \label{hahxa} For all $\mathfrak{a} \in \mathrm{Idl}(R),$ $\mathrm{h}(\mathfrak{a})=\mathrm{h}(\sqrt[\mathrm{X}]{\mathfrak{a}}).$

\item\label{xbxa} A necessary and sufficient condition for the inclusion $\mathrm{h}(\mathfrak{a})\subseteq \mathrm{h}(\mathfrak{b})$ to hold is  $\sqrt[\mathrm{X}]{\mathfrak{b}}\subseteq\sqrt[\mathrm{X}]{\mathfrak{a}}$.

\item\label{regc} Let $R$ be regular ring and $\mathfrak{a},$ $\mathfrak{b}\in \mathrm{Idl}(R)$. A  necessary and sufficient condition for the inclusion $\mathrm{h}(\mathfrak{a})\subseteq \mathrm{h}(\mathfrak{b})$ to hold is  $\mathfrak{b}\subseteq\mathfrak{a}$.

\item\label{chchk} The collections $\mathcal{C}_{\mathrm{h}}=\{ \mathrm{h}(\mathfrak{a})\mid \mathfrak{a}\in \mathrm{Idl}(R)\}$ and  $\mathcal{C}_{\mathrm{hk}}=\{ \mathrm{hk}(S)\mid S\in \mathcal{P}(\mathrm{X}(R))\}$ of sets are identical.   
\end{enumerate}
\end{proposition}

\begin{proof}
(i). The first inclusion follows from Definition \ref{grad}.

(ii). If $\mathfrak{a}\in\mathrm{X}(R),$ then $\sqrt[\mathrm{X}]{\mathfrak{a}}={\cap}\{\mathfrak{s}\in\mathrm{X}(R)\mid \mathfrak{a}\subseteq \mathfrak{s}\}=\mathfrak{a}.$

(iii). By (\ref{axaa}), $\mathfrak{a}\subseteq \sqrt[\mathrm{X}]{\mathfrak{a}},$ and hence we have $\mathrm{h}(\mathfrak{a})\supseteq\mathrm{h}(\sqrt[\mathrm{X}]{\mathfrak{a}}).$ Now the other inclusion follows from Definition \ref{grad}.
 
(iv). \textit{Necessity}. Follows from order-reversing property of $\mathrm{k}.$ 

\textit{Sufficiency}. Now  suppose $\sqrt[ \mathrm{X}]{\mathfrak{b}}\subseteq \sqrt[\mathrm{X}]{\mathfrak{a}}$, and  $\mathfrak{s}\in \mathrm{h}(\mathfrak{a})$. Then $\mathfrak{s}\in \mathrm{X}(R)$ and $\mathfrak{a}\subseteq \mathfrak{s}$, so that $\sqrt[\mathrm{X}]{\mathfrak{a}}\subseteq \mathfrak{s}$. Therefore 
$
\mathfrak{s}\in\mathrm{h}(\mathfrak{s})\subseteq \mathrm{h}(\sqrt[\mathrm{X}]{\mathfrak{a}})\subseteq \mathrm{h}(\sqrt[\mathrm{X}]{\mathfrak{b}})\subseteq\mathrm{h}(\mathfrak{b}),
$
showing that $\mathrm{h}(\mathfrak{a})\subseteq \mathrm{h}(\mathfrak{b})$.

(v). \textit{Necessary}. Since $R$ is regular, by \cite[Theorem 49, p.\,148]{M48}, $\mathrm{Idl}(R)=\mathrm{Rad}(R),$ and hence $\mathrm{h}(\mathfrak{a})\subseteq \mathrm{h}(\mathfrak{b})$ implies $\mathfrak{b}\subseteq \cap \mathrm{h}(\mathfrak{a})=\sqrt[\mathrm{X}]{\mathfrak{a}}=\mathfrak{a}.$
 
\textit{Sufficiency}. Follows from order-reversing property of $\mathrm{h}.$ 
      
(vi). If $\mathfrak{a}\in \mathrm{Idl}(R),$ then we have $\mathrm{h}(\mathfrak{a})=\mathrm{h}(\sqrt[\mathrm{X}]{\mathfrak{a}})=\mathrm{h}(\mathrm{kh}(\mathfrak{a}))=\mathrm{hk}(S),$ where $S=\{\mathfrak{s}\in\mathrm{X}(R)\mid \mathfrak{a}\subseteq \mathfrak{s}\}$. On the other hand, if $S\in \mathcal{P}(\mathrm{X}(R)),$ then $\mathrm{hk}(S)=\mathrm{h}(\cap S)=\mathrm{h}(\mathfrak{b}),$ where $\mathfrak{b}=\cap S\in \mathrm{Idl}(R).$
\end{proof}   
  
\section{Structure spaces}\label{basb}  
 
Before we extend the notion of a structure space to include all spectra, we first see how hull-kernel topology (see Definition \ref{dhk}) on a spectrum $\mathrm{X}(R)$ is related to the fact that $\mathrm{X}(R)$ satisfies (\ref{mic}), and hence we will find for which spectra we can define hull-kernel topologies. Theorem \ref{hkt} classifies such spectra, and for a proof, we refer to \cite[Section 2.2, p.\,11]{M53}.      
 
\begin{theorem}\label{hkt}
 For the composite $\mathrm{hk}$ to be a Kuratowski closure operation on a spectrum $\mathrm{X}(R),$  it is necessary and sufficient that $\mathrm{X}(R)$ has the property    
\[
\mathfrak{a}\cap \mathfrak{b}\subseteq \mathfrak{s}\quad \text{implies}\quad  \mathfrak{a}\subseteq \mathfrak{s}\;\; \text{or}\;\; \mathfrak{b}\subseteq \mathfrak{s} \label{mic} \tag{$\text{m.i.p.}$}  
\] for all $\mathfrak{a},$ $\mathfrak{b}\in \mathrm{im}(\mathrm{k})$ and for all $\mathfrak{s}\in  \mathrm{X}(R).$  
\end{theorem}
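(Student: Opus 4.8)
The plan is to measure the gap between an algebraic closure operation and a Kuratowski closure operation, since Proposition \ref{muls}(\ref{gcac}) already supplies the former at no cost. Recall that a Kuratowski closure operation must satisfy four conditions: it sends $\emptyset$ to $\emptyset$, it is extensive, it is idempotent, and it is finitely additive, that is, $\mathtt{hk}(S\cup T)=\mathtt{hk}(S)\cup\mathtt{hk}(T)$ for all $S,T\in\mathcal{P}(\mathtt{X}(R))$. Being an algebraic closure operation, $\mathtt{hk}$ is automatically extensive, monotone and idempotent; and $\mathtt{hk}(\emptyset)=\mathtt{h}(\mathtt{k}(\emptyset))=\mathtt{h}(R)=\emptyset$ by (\ref{hkd}). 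Thus three of the four axioms hold with no hypothesis on $\mathtt{X}(R)$ whatsoever, and the entire content of the theorem is concentrated in finite additivity. I would first record this reduction explicitly.

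Next I would unfold additivity into a statement purely about hulls. Since $\mathtt{hk}$ is monotone, the inclusion $\mathtt{hk}(S)\cup\mathtt{hk}(T)\subseteq\mathtt{hk}(S\cup T)$ always holds, so only the reverse inclusion is at stake. Rewriting $\mathtt{k}(S\cup T)=\mathtt{k}(S)\cap\mathtt{k}(T)$ via (\ref{unin}) and setting $\mathfrak{a}=\mathtt{k}(S)$, $\mathfrak{b}=\mathtt{k}(T)$, additivity for the pair $(S,T)$ becomes exactly the inclusion $\mathtt{h}(\mathfrak{a}\cap\mathfrak{b})\subseteq\mathtt{h}(\mathfrak{a})\cup\mathtt{h}(\mathfrak{b})$, the opposite inclusion being the automatic one from (\ref{ihs}). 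As $(S,T)$ ranges over all pairs of subsets of $\mathtt{X}(R)$, the pair $(\mathfrak{a},\mathfrak{b})$ ranges over precisely the pairs from $\mathtt{im}(\mathtt{k})$, and conversely every such pair arises this way. Hence additivity of $\mathtt{hk}$ is equivalent to the assertion that $\mathtt{h}(\mathfrak{a}\cap\mathfrak{b})\subseteq\mathtt{h}(\mathfrak{a})\cup\mathtt{h}(\mathfrak{b})$ for all $\mathfrak{a},\mathfrak{b}\in\mathtt{im}(\mathtt{k})$.

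Finally I would read this last inclusion membership-wise. By the definition of $\mathtt{h}$, the set $\mathtt{h}(\mathfrak{a}\cap\mathfrak{b})$ consists of those $\mathfrak{s}\in\mathtt{X}(R)$ with $\mathfrak{a}\cap\mathfrak{b}\subseteq\mathfrak{s}$, whereas $\mathtt{h}(\mathfrak{a})\cup\mathtt{h}(\mathfrak{b})$ consists of those $\mathfrak{s}$ with $\mathfrak{a}\subseteq\mathfrak{s}$ or $\mathfrak{b}\subseteq\mathfrak{s}$. So the inclusion says exactly that, for every $\mathfrak{s}\in\mathtt{X}(R)$, the containment $\mathfrak{a}\cap\mathfrak{b}\subseteq\mathfrak{s}$ forces $\mathfrak{a}\subseteq\mathfrak{s}$ or $\mathfrak{b}\subseteq\mathfrak{s}$, which is the property (\ref{mic}) for $\mathfrak{a},\mathfrak{b}\in\mathtt{im}(\mathtt{k})$. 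This yields both implications at once. For sufficiency, (\ref{mic}) delivers the inclusion for every $\mathfrak{a},\mathfrak{b}\in\mathtt{im}(\mathtt{k})$ and hence additivity, so $\mathtt{hk}$ is Kuratowski. For necessity, given arbitrary $\mathfrak{a},\mathfrak{b}\in\mathtt{im}(\mathtt{k})$ I would choose $S,T$ with $\mathtt{k}(S)=\mathfrak{a}$ and $\mathtt{k}(T)=\mathfrak{b}$, and read (\ref{mic}) off the additivity equation for $(S,T)$.

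I expect the only genuinely delicate point to be the bookkeeping of domains in the middle step: the property (\ref{mic}) is quantified over $\mathtt{im}(\mathtt{k})$ rather than over all of $\mathtt{Idl}(R)$, and pinning this down is what makes the equivalence sharp. The reason is structural --- the closure operation only ever tests ideals of the form $\mathtt{k}(S)$, so it can detect the condition (\ref{mic}) only on the image of $\mathtt{k}$, and imposing it there turns out to be both necessary and sufficient. Everything else is a direct translation through the identities collected in Proposition \ref{muls}.
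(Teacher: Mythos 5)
Your proof is correct and complete. There is, however, no in-paper argument to compare it against: the authors do not prove Theorem \ref{hkt} themselves but refer the reader to McKnight's thesis \cite[Section 2.2, p.\,11]{M53}, so your proposal supplies a self-contained proof that the paper omits. Your decomposition is the natural one given the toolkit already assembled in Proposition \ref{muls}: extensivity, monotonicity and idempotence of $\mathtt{hk}$ come free from the Galois connection of \ref{muls}(\ref{gcac}), and $\mathtt{hk}(\emptyset)=\mathtt{h}(\mathtt{k}(\emptyset))=\mathtt{h}(R)=\emptyset$ from \ref{muls}(\ref{hkd}) (this is where the standing assumption $R\notin\mathtt{X}(R)$ enters), so the theorem reduces exactly to the claim that finite additivity is equivalent to \eqref{mic}. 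The identity $\mathtt{k}(S\cup T)=\mathtt{k}(S)\cap\mathtt{k}(T)$ of \ref{muls}(\ref{unin}), together with the automatic inclusion $\mathtt{h}(\mathfrak{a})\cup\mathtt{h}(\mathfrak{b})\subseteq\mathtt{h}(\mathfrak{a}\cap\mathfrak{b})$ of \ref{muls}(\ref{ihs}), then converts additivity into the single inclusion $\mathtt{h}(\mathfrak{a}\cap\mathfrak{b})\subseteq\mathtt{h}(\mathfrak{a})\cup\mathtt{h}(\mathfrak{b})$ for $\mathfrak{a},\mathfrak{b}\in\mathtt{im}(\mathtt{k})$, and reading this membership-wise is verbatim \eqref{mic}. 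You were also right to dwell on the quantification: since $(S,T)\mapsto(\mathtt{k}(S),\mathtt{k}(T))$ maps onto $\mathtt{im}(\mathtt{k})\times\mathtt{im}(\mathtt{k})$ and onto no larger set, the equivalence is sharp precisely over $\mathtt{im}(\mathtt{k})$; this matches the authors' remark after Definition \ref{dhk} that restricting \eqref{mic} to $\mathtt{im}(\mathtt{k})$ suffices for an arbitrary spectrum even though $\mathtt{Spc}(R)$ satisfies it on all of $\mathtt{Idl}(R)$.
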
  
 

  
\begin{definition}\label{dhk}  

 The induced topology on $\mathrm{X}(R)$ by the Kuratowski closure operation $\mathrm{hk}$ is called the  \emph{hull-kernel} topology or \emph{stone} topology; $\cap
S$ is called the \emph{kernel} of $S,$ and $\mathrm{h}(\cap S)$ is called the \emph{hull} of $\cap S$. A spectrum $\mathrm{X}(R)$ endowed with the hull-kernel topology is called a \emph{structure space}. 
\end{definition}

Although for $\mathrm{Spec}(R)$, the property (\ref{mic}) holds  for all $\mathfrak{a},$ $\mathfrak{b}\in \mathrm{Idl}(R),$ restricting it to $\mathrm{im}(\mathrm{k})$ is sufficient for $\mathrm{hk}$ to be a Kuratowski closure operation on any spectrum $\mathrm{X}(R)$. The following proposition characterises $\mathrm{X}(R)$ for which we have $\mathrm{im}(\mathrm{k})=\mathrm{X}(R)$. 

\begin{proposition}
 For a  spectrum $\mathrm{X}(R)$ to be equal to $\mathrm{im}(\mathrm{k})$, it is necessary and sufficient that $\mathrm{X}(R)$ is closed under intersections. 
\end{proposition}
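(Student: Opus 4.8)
The plan is to establish the two directions separately, each by simply unwinding the definition $\mathtt{k}(S)=\cap S$; no machinery beyond the kernel map itself should be needed.

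For the necessity of the condition I would assume $\mathtt{X}(R)=\mathtt{im}(\mathtt{k})$ and take an arbitrary nonempty family $S\subseteq\mathtt{X}(R)$. Its intersection is precisely $\cap S=\mathtt{k}(S)$, which belongs to $\mathtt{im}(\mathtt{k})$ by the definition of the image and hence to $\mathtt{X}(R)$ by hypothesis; this is exactly closure under intersections. For the sufficiency I would assume closure under intersections and verify the two inclusions that give $\mathtt{X}(R)=\mathtt{im}(\mathtt{k})$. The inclusion $\mathtt{X}(R)\subseteq\mathtt{im}(\mathtt{k})$ comes from singletons: each $\mathfrak{a}\in\mathtt{X}(R)$ equals $\cap\{\mathfrak{a}\}=\mathtt{k}(\{\mathfrak{a}\})$, so it is a kernel. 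The reverse inclusion $\mathtt{im}(\mathtt{k})\subseteq\mathtt{X}(R)$ is where the hypothesis is used: any element of the image is $\mathtt{k}(S)=\cap S$ for some $S\subseteq\mathtt{X}(R)$, and closure under intersections puts this intersection back into $\mathtt{X}(R)$.

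The one delicate point---and really the only obstacle---is the empty family. By Proposition \ref{muls}(\ref{hkd}) we have $\mathtt{k}(\emptyset)=R$, while the standing convention forbids $R\in\mathtt{X}(R)$; taken literally this would force $R\in\mathtt{im}(\mathtt{k})\setminus\mathtt{X}(R)$, so the equality could never hold. I would resolve this by reading ``closed under intersections'' as closed under \emph{nonempty} intersections and, correspondingly, computing $\mathtt{im}(\mathtt{k})$ over $\mathcal{P}(\mathtt{X}(R))\setminus\{\emptyset\}$. Once the empty family is set aside in this way, both inclusions above go through verbatim, and the proposition reduces to the elementary remark that singletons realise the elements of $\mathtt{X}(R)$ as kernels while closure under intersections absorbs every other kernel.
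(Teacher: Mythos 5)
Your proof is correct and follows the same overall route as the paper's: sufficiency is obtained from singletons (each $\mathfrak{a}\in\mathtt{X}(R)$ equals $\mathtt{k}(\{\mathfrak{a}\})$) together with closure under intersections absorbing every kernel, and necessity from recognising intersections of families in $\mathtt{X}(R)$ as kernels. Two differences are worth recording. First, your necessity argument is more direct than the paper's: you observe that a family $S\subseteq\mathtt{X}(R)$ satisfies $\cap S=\mathtt{k}(S)\in\mathtt{im}(\mathtt{k})$ outright, whereas the paper first represents each member $\mathfrak{a}_{\alpha}$ as $\mathtt{k}(S_{\alpha})$ and then invokes Proposition \ref{muls}(\ref{unin}) to rewrite $\bigcap_{\alpha}\mathtt{k}(S_{\alpha})$ as $\mathtt{k}\left(\bigcup_{\alpha}S_{\alpha}\right)$; your version shows that detour is unnecessary. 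Second, and more substantively, your treatment of the empty family addresses a point the paper passes over in silence: since $\mathtt{k}(\emptyset)=R$ by Proposition \ref{muls}(\ref{hkd}) and the standing convention is $R\notin\mathtt{X}(R)$, the equality $\mathtt{X}(R)=\mathtt{im}(\mathtt{k})$ taken literally over all of $\mathcal{P}(\mathtt{X}(R))$ can never hold, so the proposition is only meaningful when both the image and the phrase ``closed under intersections'' are restricted to nonempty families. That is exactly the reading you adopt, and it is implicitly the one the paper's proof uses (its argument never considers $S=\emptyset$), so your explicit patch makes the statement precise rather than changing its substance.
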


\begin{proof}
\textit{Necessity}. 
Assume that $\mathrm{X}(R)=\mathrm{im}(\mathrm{k})$, and let $\{\mathfrak{a}_i\}$ be a collection of ideals belonging to $\mathrm{X}(R)$. For each $\alpha$ pick  $S_\alpha\in \mathcal P(\mathrm{X}(R))$ such that $\mathfrak{a}_\alpha=\mathrm{k}(S_\alpha)$.  Therefore
\[
{\bigcap_\alpha}\mathfrak{a}_\alpha={\bigcap_\alpha}\mathrm{k}(S_\alpha)=\mathrm{k}\Big({\bigcup_\alpha}S_\alpha\Big)\in\mathrm{im}(\mathrm{k})=\mathrm{X}(R),
\]
which shows that $\mathrm{X}(R)$ is closed under intersections.

\textit{Sufficiency}. Assume that $\mathrm{X}(R)$ is closed under intersections. If $\mathfrak a\in\mathrm{X}(R)$, then, for the  element $\{\mathfrak a\}$ of $\mathcal P(\mathrm{X}(R))$ we have $\mathrm{k}(\{\mathfrak a\})= \mathfrak a$, and that implies $\mathrm{im}(\mathrm{k})\supseteq \mathrm{X}(R).$ To get the claimed equality, what left is to show that $\mathrm{im}(\mathrm{k})\subseteq \mathrm{X}(R)$. Let $\mathfrak{a}\in \mathrm{im}(\mathrm{k})$, and pick $S\in\mathcal P(\mathrm{X}(R))$ such that $\mathfrak{a}=\mathrm{k}(S)$. Note that $S\subseteq \mathrm{X}(R)$, and hence ${\cap}S\in \mathrm{X}(R)$, by the hypothesis. But  ${\cap}S=\mathrm{k}(S)=\mathfrak{a}$; hence $\mathfrak{a}\in \mathrm{X}(R)$, which establishes the desired containment.   
\end{proof} 

The spectrum that   satisfies \eqref{mic} for all $\mathfrak{a},$ $\mathfrak{b}\in \mathrm{Idl}(R),$ is  nothing but the spectrum of strongly irreducible ideals $\mathrm{Irs}(R)$ introduced in \cite{F49} for commutative rings, under the name  \emph{primitive} ideals. In \cite[p.\,301, Exercise 34]{B72}, the ideals of the same spectrum are called \emph{quasi-prime} ideals. The terminology ``strongly irreducible'' was first used for noncommutative rings by Blair in \cite{B53}. Since $\mathrm{Spec}(R),$ $\mathrm{Spn}(R),$ and $\mathrm{Max}(R)$ are sub-spectra of $\mathrm{Irs}(R),$ they also satisfy \eqref{mic} for all $\mathfrak{a},$ $\mathfrak{b}\in \mathrm{Idl}(R),$ and hence these are the examples of spectra on which we can define hull-kernel topology. Moreover, an ideal $\mathfrak{p}$ to be prime, it is necessary and sufficient that $\mathfrak{p}$ is strongly irreducible and radical, (see \cite[Theorem 2.1]{A08}).    
It is worth it to mention that the property (\ref{mic}) is equivalent to the condition $\mathrm{h}(\mathfrak{a})\cup \mathrm{h}(\mathfrak{b})=\mathrm{h}(\mathfrak{a}\cap \mathfrak{b}),$ where $\mathfrak{a},$ $\mathfrak{b}\in \mathrm{im}(\mathrm{k}),$ \textit{cf.} Proposition \ref{muls}(\ref{ihs}). 

Whenever $\mathrm{hk}$ is a Kuratowski closure operation on an $\mathrm{X}(R)$, the collection $\mathcal{C}_{\mathrm{hk}}$ of subsets of $\mathrm{X}(R))$ are the closed subsets of the hull-kernel  topology on $\mathrm{X}(R)$ and hence, $\mathcal{C}_{\mathrm{hk}}$ is a closed base. The following theorem says that $\mathcal{C}_{\mathrm{hk}}$ characterises $\mathrm{hk}$ as Kuratowski closure operation. 
 
\begin{theorem}\label{kclcb} 
 For $\mathrm{hk}$ to be a Kuratowski closure operation on a spectrum $\mathrm{X}(R),$ it is necessary and sufficient that $\mathcal{C}_{\mathrm{hk}}$ is a closed base of the hull-kernel topology on $\mathrm{X}(R).$ 
\end{theorem}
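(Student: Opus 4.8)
The plan is to prove both directions of the biconditional.

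For the forward direction, suppose $\mathtt{hk}$ is a Kuratowski closure operation on $\mathtt{X}(R)$. Then by definition the closed sets of the resulting topology are exactly the $\mathtt{hk}$-closed sets, i.e.\ those $S \subseteq \mathtt{X}(R)$ with $\mathtt{hk}(S) = S$. First I would observe that $\mathcal{C}_{\mathtt{hk}} = \{\mathtt{hk}(S) \mid S \in \mathcal{P}(\mathtt{X}(R))\}$ consists precisely of these fixed points: since $\mathtt{hk}$ is a closure operator, $\mathtt{hk}(\mathtt{hk}(S)) = \mathtt{hk}(S)$ by idempotency (part of being Kuratowski), so every member of $\mathcal{C}_{\mathtt{hk}}$ is closed, and conversely every closed set $T$ equals $\mathtt{hk}(T) \in \mathcal{C}_{\mathtt{hk}}$. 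Thus $\mathcal{C}_{\mathtt{hk}}$ is exactly the family of all closed sets, which is trivially a closed base for the topology it generates.

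For the converse, suppose $\mathcal{C}_{\mathtt{hk}}$ is a closed base of the hull-kernel topology on $\mathtt{X}(R)$. Here the delicate point is that ``the hull-kernel topology'' is only literally defined (Definition \ref{dhk}) once we know $\mathtt{hk}$ is a Kuratowski closure operation, so I would read the hypothesis as asserting that $\mathcal{C}_{\mathtt{hk}}$ is closed under arbitrary intersections and finite unions (the defining property of a closed base being that it is closed under finite unions, together with the full family of closed sets being its intersection-closure). The strategy is to verify the Kuratowski axioms for $\mathtt{hk}$ directly. By Proposition \ref{muls}(\ref{gcac}), $\mathtt{hk}$ is already an algebraic closure operation, so it is extensive ($S \subseteq \mathtt{hk}(S)$), monotone, and idempotent; what remains is to check $\mathtt{hk}(\emptyset) = \emptyset$ and finite additivity $\mathtt{hk}(S \cup T) = \mathtt{hk}(S) \cup \mathtt{hk}(T)$.

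The main obstacle is establishing finite additivity, and this is where the closed-base hypothesis does the essential work. I would argue that since $\mathtt{hk}(S)$ and $\mathtt{hk}(T)$ both lie in $\mathcal{C}_{\mathtt{hk}}$ and a closed base is closed under finite unions, $\mathtt{hk}(S) \cup \mathtt{hk}(T) \in \mathcal{C}_{\mathtt{hk}}$, so it equals $\mathtt{hk}(U)$ for some $U$; combined with extensivity and monotonicity ($\mathtt{hk}(S)\cup\mathtt{hk}(T) \subseteq \mathtt{hk}(S\cup T)$ is automatic, and the reverse follows because $S \cup T \subseteq \mathtt{hk}(S)\cup\mathtt{hk}(T)$ forces $\mathtt{hk}(S\cup T) \subseteq \mathtt{hk}(\mathtt{hk}(S)\cup\mathtt{hk}(T)) = \mathtt{hk}(S)\cup\mathtt{hk}(T)$ by idempotency) this yields the desired equality. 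Alternatively, I expect the cleanest route is to invoke Theorem \ref{hkt}: translating the closed-base hypothesis (specifically, that $\mathtt{h}(\mathfrak{a}) \cup \mathtt{h}(\mathfrak{b})$ is always a hull, via Proposition \ref{hrx}(\ref{chchk}) identifying $\mathcal{C}_{\mathtt{hk}}$ with $\mathcal{C}_{\mathtt{h}}$) into the meet-irreducibility property (\ref{mic}) on $\mathtt{im}(\mathtt{k})$, which by Theorem \ref{hkt} is equivalent to $\mathtt{hk}$ being Kuratowski. The boundary condition $\mathtt{hk}(\emptyset) = \emptyset$ should follow from $\mathtt{k}(\emptyset) = R$ (Proposition \ref{muls}(\ref{hkd})) and $\mathtt{h}(R) = \emptyset$, provided it is checked that $\emptyset \in \mathcal{C}_{\mathtt{hk}}$, which holds since $\emptyset = \mathtt{h}(R) \in \mathcal{C}_{\mathtt{h}} = \mathcal{C}_{\mathtt{hk}}$.
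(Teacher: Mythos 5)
Your proof is correct, but the sufficiency direction takes a genuinely different route from the paper's. The paper keeps the standard meaning of ``closed base'' --- every closed set of the generated topology is an intersection of members of $\mathcal{C}_{\mathtt{hk}}$ --- and then proves that the closure operator $\mathtt{cl}$ of that topology coincides with $\mathtt{hk}$: the base hypothesis gives $\mathtt{cl}(T)=\bigcap\{\mathtt{hk}(S)\mid T\subseteq \mathtt{hk}(S)\}$, and the Galois-connection identity $\mathtt{khk}=\mathtt{k}$ shows this intersection is exactly $\mathtt{hk}(T)$; Kuratowski-ness is then inherited from $\mathtt{cl}$, and the topology is never analysed axiom by axiom. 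You instead verify the Kuratowski axioms directly: extensivity, monotonicity and idempotency from Proposition \ref{muls}(\ref{gcac}); $\mathtt{hk}(\emptyset)=\mathtt{h}(\mathtt{k}(\emptyset))=\mathtt{h}(R)=\emptyset$ from Proposition \ref{muls}(\ref{hkd}); and finite additivity from the key observation that $\mathtt{hk}(S)\cup\mathtt{hk}(T)$, being a member of $\mathcal{C}_{\mathtt{hk}}$, is a fixed point of $\mathtt{hk}$, so that the sandwich $\mathtt{hk}(S)\cup\mathtt{hk}(T)\subseteq \mathtt{hk}(S\cup T)\subseteq \mathtt{hk}\big(\mathtt{hk}(S)\cup\mathtt{hk}(T)\big)=\mathtt{hk}(S)\cup\mathtt{hk}(T)$ closes. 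What the paper's route buys is that it works verbatim with the standard definition of a closed base and yields, as a by-product, the identification of $\mathtt{hk}$ with the closure of the generated topology; what yours buys is a purely order-theoretic argument that isolates exactly which Kuratowski axiom the base hypothesis supplies, namely additivity.

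One point you should patch: your parenthetical claim that the \emph{definition} of a closed base includes closure under finite unions is not standard (in a two-point discrete space the two singletons form a closed base that is not union-closed), so your reading of the hypothesis needs reconciling with the stated one. It does reconcile, but only because of a special feature of this family: by Propositions \ref{hrx}(\ref{chchk}) and \ref{muls}(\ref{insum}), $\mathcal{C}_{\mathtt{hk}}=\mathcal{C}_{\mathtt{h}}$ is always closed under arbitrary intersections, so if it is a closed base then any finite union of its members, being closed in the generated topology, is an intersection of members and hence itself a member. Inserting that one observation makes your reading equivalent to the hypothesis and your argument complete. Your fallback route via Theorem \ref{hkt} also works, but it is not actually shorter: translating ``$\mathtt{h}(\mathfrak{a})\cup\mathtt{h}(\mathfrak{b})=\mathtt{h}(\mathfrak{c})$ for some $\mathfrak{c}$'' into \eqref{mic} requires showing $\mathfrak{c}\subseteq\mathfrak{a}\cap\mathfrak{b}$, which uses $\mathtt{kh}(\mathfrak{a})=\mathfrak{a}$ for $\mathfrak{a}\in\mathtt{im}(\mathtt{k})$ --- essentially the same Galois-connection computation the paper performs.
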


\begin{proof}      
\textit{Necessity}. Let $\mathrm{hk}$ be a Kuratowski closure operation on $\mathrm{X}(R).$ Then $\mathcal{C}_{\mathrm{hk}}$ is the set of closed subsets of $\mathrm{X}(R),$ and hence $\mathcal{C}_{\mathrm{hk}}$ is also a closed base for $\mathrm{X}(R).$
   
\textit{Sufficiency}.
In the proof, we shall use the fact that if $\mathcal{B}$ is a closed base on a space $X$, then the  closure operator induced by the topology  is expressible in terms of $\mathcal{B}$, in the sense that, for any $T\subseteq  X$,  
$
\mathrm{cl}(T)={\cap}\{B\in \mathcal{B}\mid T\subseteq B\}.
$
Suppose that $\mathcal{C}_{\mathrm{hk}}$ is a closed base for $\mathrm{X}(R)$. Let $\mathrm{cl}(-)$ be the resulting closure operation. We shall be done if we can show that 	$\mathrm h\mathrm k(-)=\mathrm{cl}(-)$. Consider, then, any $T\subseteq \mathrm{X}(R)$. As observed above,
\[ 
\mathrm{cl}(T)=\cap\{\mathrm{hk}(S)\in \mathcal{C}_{\mathrm{hk}}\mid  T\subseteq \mathrm{hk}(S)\}.
\]
Since $\mathrm{hk}(T)\in \mathcal{C}_{\mathrm{hk}}$ and $T\subseteq \mathrm{hk}(T)$, we deduce from  the equality $\mathrm{hk}(\mathrm{hk}(T))=\mathrm{hk}(T)$ that $\mathrm{cl}(T)\subseteq \mathrm{hk}(T)$. To have the reverse inclusion, let $\mathfrak a\in \mathrm{hk}(T)$. Consider any $S\in \mathcal{P}(\mathrm{X}(R))$ with $T\subseteq \mathrm{hk}(S)$. Then, using the facts that $\mathrm{h}$ and $\mathrm k$ are in a Galois connection, and $\mathrm{k}$ is order-reversing, we have 
\[
\mathrm{k}(S)=\mathrm{khk}(S)\subseteq \mathrm{k}(T).
\]
Since  $\mathrm{k}(T)\subseteq \mathfrak a$, as $\mathfrak{a}\in \mathrm{hk}(T)$, it follows that $\mathrm{k}(S)\subseteq \mathfrak{a}$. Thus, $\mathfrak{a}\in\mathrm{hk}(S)$.  In all then, we have shown that $\mathfrak{a}$ belongs to every set included in the intersection that describes $\mathrm{cl}(T)$, and so $\mathfrak{a}\in \mathrm{cl}(T)$, and so $\mathrm{hk}(T)\subseteq \mathrm{cl}(T)$. Consequently, $\mathrm{hk}(T)=\mathrm{cl}(T)$, and since $T$ is an arbitrary subset of $\mathrm{X}(R)$, it follows that $\mathrm{hk}(-)=\mathrm{cl}(-)$, and hence $\mathrm{hk}$ is a Kuratowski closure operation.	
\end{proof}
 
For $\mathrm{Spec}(R),$ instead of $\mathcal{C}_{\mathrm{hk}}$, we  use $\mathcal{C}_{\mathrm{h}}$ (thanks to the equality in Proposition \ref{hrx}(\ref{chchk}))  as the collection of closed     subsets of a Zariski (= hull-kernel) topology on $\mathrm{Spec}(R).$ Moreover, $\mathrm{h}(\mathfrak{a})=\bigcap_{a\in \mathfrak{a}}\mathrm{h}(\langle \{a \}\rangle)$ implies that the collection $\{\mathrm{h}(\mathfrak{a})\mid \mathfrak{a}\in \mathrm{Prn}(R)\}$ is a closed base of a Zariski  topology on $\mathrm{Spec}(R).$  
  
\section{Ideal spaces}\label{idsp} 

We have seen that $\mathrm{hk}$ is a Kuratowski closure operation on $\mathrm{Irs}(R)$ or on its sub-spectra, and hence we can define hull-kernel topologies on them. Let us now see examples of spectra that fail to satisfy (\ref{mic}), and hence $\mathrm{hk}$ does not induce hull-kernel topologies on them.  

\begin{example}\label{nkc1}
 For $\mathrm{Min}(R)$, consider the ring $R = \mathds{Z}_2\times  \mathds{Z}_2 \times  \mathds{Z}_2$,                   and the minimal ideals $\mathfrak{x}$, $\mathfrak{y}$, and $\mathfrak{z}$, generated by $(1,0,0),$ $(0,1,0),$ and $(0,0,1)$ respectively. Although $\mathfrak{x}\cap \mathfrak{y}\subseteq \mathfrak{z},$ but $\mathfrak{x}\nsubseteq \mathfrak{z}$ and $\mathfrak{y}\nsubseteq \mathfrak{z}.$  
\end{example} 
 
\begin{examples}\label{nkc2} 
 For $\mathrm{Prp}(R),$ $\mathrm{hk}$ is not a Kuratowski closure operation. To see this, let $R=\mathds{Z}$ and consider $S=\{2\mathds{Z}\},$ $S'=\{3\mathds{Z}\},$ and $\mathfrak{a}=6\mathds{Z}.$ Since $\mathrm{k}(\{2\mathds{Z}\})\cap \mathrm{k}(\{3\mathds{Z}\})=2\mathds{Z}\cap 3\mathds{Z}=6\mathds{Z}\subseteq 6\mathds{Z},$ but $2\mathds{Z}\nsubseteq 6\mathds{Z}$ and $3\mathds{Z}\nsubseteq 6\mathds{Z}.$ The same argument also holds for $\mathrm{Prn}(R),$ and hence for $\mathrm{Fgn}(R).$  Since
$6\mathds{Z}\in \mathrm{Rad}(R),$ by the same counterexample, $\mathrm{hk}$ is also not a Kuratowski closure operation on $\mathrm{Rad}(R).$ 
\end{examples} 

\begin{examples}\label{nkc3}
 Since $\mathrm{Irr}(R)$ and $\mathrm{Prm}(R)$ are in general not sub-spectra of $\mathrm{Irs}(R)$ (see \cite{S16}  and \cite{HRR02}), these spectra also do not satisfy (\ref{mic}).
\end{examples} 

If $\mathrm{hk}$ is not a Kuratowski closure  operation on a spectrum $\mathrm{X}(R)$, then by Theorem \ref{kclcb}, the collection   $\mathcal{C}_{\mathrm{hk}}$ (or, equivalently $\mathcal{C}_{\mathrm{h}}$) \emph{also} does not form a closed base of a  hull-kernel topology. Nevertheless, for such a  spectrum, we still have $\bigcap_{ S\,\in\, \mathcal{P}\!(\mathrm{X}(R))}\mathrm{hk}(S) =\emptyset$ (or, equivalently $\bigcap_{ \mathfrak{a}\,\in\, \mathrm{Idl}(R))}\mathrm{h}(\mathfrak{a}) =\emptyset$), which follows from Proposition \ref{muls}(\ref{hkd})). Hence, by \cite[Theorem\,15\,A.13., p.\,254]{C66}, the collection $\mathcal{C}_{\mathrm{hk}}$ (or, equivalently $\mathcal{C}_{\mathrm{h}}$) as  a closed subbase, generates a unique topology (denoted by $\tau_{\scriptscriptstyle\mathrm{X}(R)}$)  on $\mathrm{X}(R)$. The topology $\tau_{\scriptscriptstyle\mathrm{X}(R)}$ is called the coarse lower topology or the lower topology. A hull-kernel topology coincides with $\tau_{\scriptscriptstyle\mathrm{X}(R)}$ when $\mathrm{X}(R)$ satisfies (\ref{mic}). Note that when we endow a lower topology (generated by $\mathcal{C}_{\mathrm{h}}$) on an $\mathrm{X}(R),$ it coincides with subspace topology of the lower topology on $\mathrm{Idl}(R)$.   We are now ready to extend the definition of a structure space of a ring.   
 
\begin{definition}\label{sbt} 
 A spectrum $\mathrm{X}(R)$ endowed with the $\tau_{\scriptscriptstyle\mathrm{X}(R)}$ topology is called an \emph{ideal space}.
\end{definition}  

\begin{notation}
 Whenever the collection  $\mathcal{C}_{\mathrm{h}}$  forms only a closed subbase of an ideal space $\mathrm{X}(R),$ we denote the subbase by $\mathcal{S}_{\mathrm{h}}$, and whenever the collection $\mathcal{C}_{\mathrm{h}}$ also forms a closed base,  we denote the corresponding base by $\mathcal{B}_{\mathrm{h}}$. 
\end{notation} 

In the next three subsections, we will study various topological properties of ideal spaces. A number of these properties are direct extension from that of the structure spaces of $\mathrm{Spec}(R)$ or $\mathrm{Max}(R)$, whereas for others to hold for an ideal space requires additional assumptions. 

\subsection{Maps between ideal spaces} \label{conti}   
 
Although the inverse image of a prime ideal under a ring homomorphism is a prime ideal, the same is not true for an arbitrary spectrum $\mathrm{X}(R).$ For example, it fails to hold in $\mathrm{Max}(R).$ Therefore, to have a  continuous map between two  ideal spaces of same spectra (as we obtain for $\mathrm{Spec}(R)$), we need the following property.  

\begin{definition}
 We say a spectrum $\mathrm{X}(R)$ satisfies the \emph{contraction-ideal} property if for any ring homomorphism $f\colon R\to R',$ the inverse image  $f\inv(\mathfrak{b})$ is in $\mathrm{X}(R)$, whenever $\mathfrak{b}$ is in $\mathrm{X}(R').$ 
\end{definition}

With this assumption on spectra and using subbasic-closed-set formulation of continuity, we obtain the following properties. The arguments of these proofs are more or less identical as we see for $\mathrm{Spec}(R),$ except the fact that we have to use subbasic closed sets. 
    
\begin{proposition}\label{conmap}
 Let $\mathrm{X}(R)$ be a spectrum satisfying the contraction-ideal property. Let $f\colon R\to R'$ be a ring homomorphism  and $\mathfrak{b}$ be in $\mathrm{X}(R').$ 
\begin{enumerate}[\upshape (i)] 
\item \label{contxr} The map $f^*\colon  \mathrm{X}(R')\to \mathrm{X}(R)$ defined by  $f^*(\mathfrak{b})=f\inv(\mathfrak{b})$ is    continuous.

\item If $f$ is  surjective, then the ideal space $\mathrm{X}(R')$ is homeomorphic to the closed subset $\mathrm{h}(\mathrm{ker}(f))$ of the ideal space $\mathrm{X}(R).$

\item For  $f^*(\mathrm{X}(R'))$ to be dense in $\mathrm{X}(R),$ it is necessary and sufficient that $\mathrm{ker}(f)\subseteq \bigcap_{\mathfrak{s}\in \mathrm{X}(R)}\mathfrak{s}.$ 
  
\item If  $S$ is a multiplicative closed subset of a ring $R$ and if $R_{ S}$ is the localisation of $R$ at $S$, then there is a closed, continuous, and injective map from the ideal space  $\mathrm{X}(R_{ S})$ to the ideal space $\mathrm{X}(R)_S=\{ \mathfrak{s}\in \mathrm{X}(R)\mid \mathfrak{s}\cap S=\emptyset\}$.  
\end{enumerate}
\end{proposition}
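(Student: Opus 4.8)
The plan is to establish each of the four items separately, throughout using the subbasic-closed-set characterisation of continuity: a map into $\mathtt{X}(R)$ is continuous precisely when the preimage of each subbasic closed set $\mathtt{h}(\mathfrak{a})$ (with $\mathfrak{a}\in\mathtt{Idl}(R)$) is closed. The central computation underlying everything is the identity $(f^*)\inv(\mathtt{h}(\mathfrak{a}))=\mathtt{h}(\langle f(\mathfrak{a})\rangle)$, which I would verify first since it drives (i), (ii), and (iv). Indeed, for $\mathfrak{b}\in\mathtt{X}(R')$ we have $f^*(\mathfrak{b})\in\mathtt{h}(\mathfrak{a})$ iff $\mathfrak{a}\subseteq f\inv(\mathfrak{b})$ iff $f(\mathfrak{a})\subseteq\mathfrak{b}$ iff $\langle f(\mathfrak{a})\rangle\subseteq\mathfrak{b}$ iff $\mathfrak{b}\in\mathtt{h}(\langle f(\mathfrak{a})\rangle)$. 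Note the contraction-ideal hypothesis is exactly what makes $f^*$ well-defined as a map into $\mathtt{X}(R)$, i.e. $f\inv(\mathfrak{b})\in\mathtt{X}(R)$; continuity in (i) then follows because each $(f^*)\inv(\mathtt{h}(\mathfrak{a}))$ is a subbasic closed set of $\mathtt{X}(R')$.

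For (ii), I would show $f^*$ is a homeomorphism onto its image and identify the image as $\mathtt{h}(\mathtt{ker}(f))$. When $f$ is surjective, the correspondence theorem gives a bijection between ideals of $R'$ and ideals of $R$ containing $\mathtt{ker}(f)$, via $\mathfrak{b}\mapsto f\inv(\mathfrak{b})$ with inverse $\mathfrak{c}\mapsto f(\mathfrak{c})$; this bijection is inclusion-preserving in both directions, so it restricts to a bijection between $\mathtt{X}(R')$ and the subset $\{\mathfrak{s}\in\mathtt{X}(R)\mid \mathtt{ker}(f)\subseteq\mathfrak{s}\}=\mathtt{h}(\mathtt{ker}(f))$, provided one checks both that contractions of $\mathtt{X}(R')$-ideals lie in $\mathtt{X}(R)$ (the contraction-ideal property) and that expansions $f(\mathfrak{c})$ of $\mathtt{X}(R)$-ideals containing the kernel lie in $\mathtt{X}(R')$. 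The latter is the delicate point, since ``type'' need not be preserved under $f$ in general; I would either argue it from the isomorphism $R'\cong R/\mathtt{ker}(f)$ together with the order-isomorphism of ideal lattices, or flag it as an implicit requirement on admissible spectra. Finally, both $f^*$ and its inverse pull subbasic closed sets back to subbasic closed sets using the same $\langle f(-)\rangle$/$f\inv(-)$ identity, so the bijection is a homeomorphism.

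Item (iii) is a density statement: $f^*(\mathtt{X}(R'))$ is dense in $\mathtt{X}(R)$ iff its closure is all of $\mathtt{X}(R)$. Using that the closure in the lower topology is computed via the closed subbase, and recalling from the proof of Theorem \ref{kclcb} that the closure of a set $T$ is governed by $\mathtt{k}(T)$, I would compute $\mathtt{k}(f^*(\mathtt{X}(R')))=\bigcap_{\mathfrak{b}\in\mathtt{X}(R')}f\inv(\mathfrak{b})=f\inv\!\bigl(\bigcap_{\mathfrak{b}\in\mathtt{X}(R')}\mathfrak{b}\bigr)$. The image is dense iff the smallest subbasic closed set containing it is $\mathtt{X}(R)=\mathtt{h}(\mathfrak{o})$, which by the order-reversing Galois machinery amounts to $\mathtt{k}(f^*(\mathtt{X}(R')))\subseteq\bigcap_{\mathfrak{s}\in\mathtt{X}(R)}\mathfrak{s}$; since $\mathtt{ker}(f)=f\inv(\mathfrak{o})\subseteq f\inv(\mathfrak{b})$ for every $\mathfrak{b}$, this reduces to the stated criterion $\mathtt{ker}(f)\subseteq\bigcap_{\mathfrak{s}\in\mathtt{X}(R)}\mathfrak{s}$. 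I expect the main obstacle here to be phrasing density cleanly in terms of the \emph{subbase} rather than a base, since closures need not be single subbasic sets; the $\mathtt{k}$-computation circumvents this.

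For (iv), I would apply the localisation map $\lambda\colon R\to R_S$ and invoke the standard order-isomorphism between ideals of $R_S$ and $S$-saturated ideals of $R$ (those meeting $S$ only in the obvious way), under which $\mathtt{X}(R_S)$ corresponds to $\mathtt{X}(R)\backslash S$ — here I read the notation $\mathtt{X}(R)\backslash S$ as the sub-collection of $\mathtt{X}(R)$ of ideals disjoint from $S$. The map $\lambda^*$ realises this bijection, and the same $\langle\lambda(-)\rangle$/$\lambda\inv(-)$ identity shows it carries subbasic closed sets to subbasic closed sets both ways, giving the homeomorphism. The technical points to check are that contractions of $\mathtt{X}(R_S)$-ideals remain of the correct type in $\mathtt{X}(R)$ (again the contraction-ideal property) and that extensions of disjoint-from-$S$ ideals in $\mathtt{X}(R)$ land in $\mathtt{X}(R_S)$; as in (ii), type-preservation under extension is the subtle hypothesis. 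The overarching difficulty across all four parts is precisely this preservation of ``type'' under the algebraic operations $f(-)$ and extension-contraction, rather than the topological bookkeeping, which is uniformly handled by the Galois connection of Proposition \ref{muls} and the subbasic-set identity.
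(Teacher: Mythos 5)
Your treatment of (i), (ii) and (iv) follows essentially the same route as the paper: the identity $(f^*)\inv(\mathtt{h}(\mathfrak{a}))=\mathtt{h}(\langle f(\mathfrak{a})\rangle)$ gives (i), and (ii), (iv) are obtained from a bijection onto $\mathtt{h}(\mathtt{ker}(f))$ (respectively onto the ideals of $\mathtt{X}(R)$ disjoint from $S$) under which subbasic closed sets correspond to subbasic closed sets in both directions. The ``delicate point'' you flag is genuine: the contraction-ideal property does not imply that expansions $f(\mathfrak{c})$ (or $\mathfrak{a}R_S$) remain in the spectrum, and the paper assumes this silently, both when it asserts $\mathtt{im}(f^*)=\mathtt{h}(\mathtt{ker}(f))$ in (ii) and when it claims surjectivity of $f^*$ in (iv); so on these three items your write-up is, if anything, more candid than the paper's.

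Part (iii), however, contains a genuine gap. You test density against the subbase: dense if and only if $\mathtt{hk}(T)=\mathtt{X}(R)$ for $T=f^*(\mathtt{X}(R'))$, equivalently $\mathtt{k}(T)\subseteq\bigcap_{\mathfrak{s}\in \mathtt{X}(R)}\mathfrak{s}$, and you claim the $\mathtt{k}$-computation ``circumvents'' the subbase-versus-base issue. It does not: in an ideal space that is not a structure space, the closure of $T$ can be strictly smaller than $\mathtt{hk}(T)$ --- this is exactly what failure of (\ref{mic}) permits --- so $\mathtt{hk}(T)=\mathtt{X}(R)$ does not imply that $T$ is dense. Concretely, in $\mathtt{Prp}(\mathds{Z}_{12})$ take $T=\{\langle 4\rangle,\langle 6\rangle\}$; then $\mathtt{k}(T)=\langle 4\rangle\cap\langle 6\rangle=\mathfrak{o}$, so $\mathtt{hk}(T)=\mathtt{h}(\mathfrak{o})$ is the whole space, yet the closed set $\mathtt{h}(\langle 4\rangle)\cup\mathtt{h}(\langle 6\rangle)=\{\langle 2\rangle,\langle 3\rangle,\langle 4\rangle,\langle 6\rangle\}$ contains $T$ and omits $\mathfrak{o}$, so $T$ is not dense. (Your final reduction is also only one-directional as stated: since $\mathtt{ker}(f)\subseteq\mathtt{k}(T)$, the inclusion $\mathtt{ker}(f)\subseteq\bigcap_{\mathfrak{s}}\mathfrak{s}$ does not formally yield $\mathtt{k}(T)\subseteq\bigcap_{\mathfrak{s}}\mathfrak{s}$.) The paper avoids this by proving the closure formula $\overline{f^*(\mathtt{h}(\mathfrak{b}))}=\mathtt{h}(f\inv(\mathfrak{b}))$ and evaluating it at $\mathfrak{b}=\mathfrak{o}$, which gives $\overline{f^*(\mathtt{X}(R'))}=\mathtt{h}(\mathtt{ker}(f))$ and hence both directions of the criterion at once. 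To repair your argument you must control arbitrary finite unions of subbasic closed sets containing the image --- that is, prove such a closure identity --- rather than argue through $\mathtt{hk}$ alone.
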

  
\begin{proof}      
(i). Let $\mathrm{h}(\mathfrak{a})$ be a   subbasic closed set of the ideal  space $\mathrm{X}(R).$ Observe  that $$(f^*)\inv(\mathrm{h}(\mathfrak{a})) =\{ \mathfrak{b}\in  \mathrm{X}(R')\mid f\inv(\mathfrak{b})\in \mathrm{h}(\mathfrak{a})\}=\{\mathfrak{b}\in \mathrm{X}(R')\mid f(\mathfrak{a})\subseteq \mathfrak{b}\}=\mathrm{h}(\langle f(\mathfrak{a})\rangle),$$ and hence the map $f^*$  continuous.     
  
(ii). Since  $\mathfrak{o}\subseteq \mathfrak{b}$ for all $\mathfrak{b}\in \mathrm{X}(R'),$ we have     
$\mathrm{ker}(f)\subseteq f\inv(\mathfrak{b}),$ or, in other words $f^*(\mathfrak{b})\in \mathrm{h}(\mathrm{ker}(f)).$ This implies that $\mathrm{im}(f^*)=\mathrm{h}(\mathrm{ker}(f)).$  
Since for all $\mathfrak{b}\in \mathrm{X}(R'),$ $$f(f^*(\mathfrak{b}))=f(f\inv(\mathfrak{b}))=\mathfrak{b}\cap \mathrm{im}(f)=\mathfrak{b}\cap R'=\mathfrak{b},$$ the map $f^*$ is injective. To show that $f^*$ is a closed map, first we observe that for any subbasic closed subset  $\mathrm{h}(\mathfrak{a})$ of  $\mathrm{X}(R')$, we have $$f^*(\mathrm{h}(\mathfrak{a}))=  f\inv(\mathrm{h}(\mathfrak{a}))=f\inv\{ \mathfrak{i}'\in \mathrm{X}(R')\mid \mathfrak{a}\subseteq   \mathfrak{i}'\}=\mathrm{h}(f\inv(\mathfrak{a})).$$   Now if $C$ is a closed subset of $\mathrm{X}(R')$ and $C=\bigcap_{ \alpha \in \Omega} (\bigcup_{ i \,= 1}^{ n_{\alpha}} \mathrm{h}(\mathfrak{a}_{ i\alpha})),$ then $$f^*(C)=f\inv \left(\bigcap_{ \alpha \in \Omega} \left(\bigcup_{ i = 1}^{ n_{\alpha}} \mathrm{h}(\mathfrak{a}_{ i\alpha})\right)\right)=\bigcap_{ \alpha \in \Omega} \bigcup_{ i = 1}^{ n_{\alpha}} \mathrm{h}(f\inv(\mathfrak{a}_{ i\alpha})),$$ a closed subset of  $\mathrm{X}(R).$ Since $f^*$ is   continuous, we have the proof.

(iii). We first show that $\overline{f^*(\mathrm{h}(\mathfrak{b}))}=\mathrm{h}(f\inv(\mathfrak{b})),$ for all ideals $\mathfrak{b}\in R'.$ To this end, let $\mathfrak{s}\in f^*(\mathrm{h}(\mathfrak{b})).$ This implies $f(\mathfrak{s})\in \mathrm{h}(\mathfrak{b}),$ which means $\mathfrak{b}\subseteq f(\mathfrak{s}).$ In other words, $\mathfrak{s}\in \mathrm{h}(f\inv(\mathfrak{b})).$ The other inclusion follows from the fact that $f\inv(\mathrm{h}(\mathfrak{b}))=\mathrm{h}(f\inv(\mathfrak{b})).$ Since, $$\overline{f^*(\mathrm{X}(R'))}=f^*(\mathrm{h}(\mathfrak{o}))=\mathrm{h}(f\inv(\mathfrak{o}))=\mathrm{h}(\mathrm{ker}(f)),$$ a necessary and sufficient condition for $\mathrm{h}(\mathrm{ker}(f))$ to be equal to $\mathrm{X}(R))$ is $\mathrm{ker}(f)\subseteq \cap_{\mathfrak{s}\in \mathrm{X}(R)}\mathfrak{s}.$ 
 
(iv). The ring homomorphism $f \colon R\to R_{ S}$ defined by $f(r)=r/1$ induces a map $f^*\colon \mathrm{X}(R_{ S})\to \mathrm{X}(R)$ defined by $f^*(\mathfrak{a})=f\inv(\mathfrak{a}).$ We claim that  $f^*(\mathfrak{a})\cap S=\emptyset.$ If not, let $s\in f^*(\mathfrak{a})\cap S.$ Then $$f(s)\in f(f\inv(\mathfrak{a})\cap S)=f(f\inv(\mathfrak{a}))\cap f(S)=\mathfrak{a}\cap f(S),$$ and hence $f(s)\in \mathfrak{a}.$ Since $f(s)$ is a unit in $R_{ S},$ this implies $\mathfrak{a}=R_{ S},$ a contradiction. Therefore, $f^*$ is indeed a map from $\mathrm{X}(R_{ S})$ to $\mathrm{X}(R)_S.$ If $f^*(\mathfrak{a})=f^*(\mathfrak{b})$ for some $\mathfrak{a},$ $\mathfrak{b}\in \mathrm{X}(R_{ S}),$ then $$\mathfrak{a}=f(f\inv(\mathfrak{a}))=f(f\inv (\mathfrak{b}))=\mathfrak{b}$$ shows that $f^*$ is injective.   
The map    $f^*\colon\mathrm{X}(R_{ S})\to\mathrm{X}(R)\backslash S$ is continuous follows from (\ref{contxr}). Since $f^*(\mathrm{h}(\mathfrak{a}))=\mathrm{h}(f\inv(\mathfrak{a})),$ the map $f^*$ is also closed. Therefore, $f^*$ has the desired properties.    
\end{proof}    

Taking $f$ as a quotient map, we immediately obtain a result similar to $\mathrm{Spec}(R).$ 

\begin{corollary}
 Let $\mathrm{X}(R)$ be a spectrum satisfying the contraction-ideal property. Then for each ideal $\mathfrak{a}$ of $R$, the ideal space $\mathrm{X}(R/\mathfrak{a})$ is homeomorphic to the closed subspace
$\mathrm{h}(\mathfrak{a})$ of $\mathrm{X}(R)$.   
\end{corollary}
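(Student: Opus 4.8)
The plan is to deduce this directly from Proposition \ref{conmap}(ii) by taking $f$ to be the canonical quotient homomorphism $\pi\colon R\to R/\mathfrak{a}$, given by $\pi(r)=r+\mathfrak{a}$. This map is surjective and its kernel is precisely $\mathfrak{a}$. So once Proposition \ref{conmap}(ii) is seen to apply, it yields at once that the ideal space $\mathtt{X}(R/\mathfrak{a})$ is homeomorphic to the closed subset $\mathtt{h}(\mathtt{ker}(\pi))=\mathtt{h}(\mathfrak{a})$ of $\mathtt{X}(R)$, which is exactly the assertion, and no separate computation of the homeomorphism is needed.

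The one point that must be settled before invoking Proposition \ref{conmap}(ii) is that the contraction-ideal property is available for $\pi$, i.e. that $\pi\inv(\mathfrak{c})\in\mathtt{X}(R)$ whenever $\mathfrak{c}\in\mathtt{X}(R/\mathfrak{a})$. Here I would appeal to the lattice correspondence theorem: the assignment $\mathfrak{c}\mapsto\pi\inv(\mathfrak{c})$ is an inclusion-preserving bijection between the ideals of $R/\mathfrak{a}$ and the ideals of $R$ containing $\mathfrak{a}$, with inverse $\mathfrak{b}\mapsto\mathfrak{b}/\mathfrak{a}$. Since membership of an ideal in a spectrum depends only on its `type', and this correspondence carries ideals of a given type in $R/\mathfrak{a}$ to ideals of the same type in $R$, the contraction-ideal property holds for the quotient map; in particular $\pi\inv(\mathfrak{c})\in\mathtt{X}(R)$ for every $\mathfrak{c}\in\mathtt{X}(R/\mathfrak{a})$, so Proposition \ref{conmap} is applicable.

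With the contraction-ideal property in hand, the remainder is a direct citation of Proposition \ref{conmap}(ii). The homeomorphism it supplies is $\pi^*\colon\mathtt{X}(R/\mathfrak{a})\to\mathtt{h}(\mathfrak{a})$, $\mathfrak{c}\mapsto\pi\inv(\mathfrak{c})$, which under the correspondence above is simply $\mathfrak{b}/\mathfrak{a}\mapsto\mathfrak{b}$. The main obstacle, such as it is, lies entirely in the middle paragraph: making precise that the quotient correspondence respects the (informally specified) notion of `type' of ideal and therefore furnishes the contraction-ideal property for $\pi$. Once that is granted, the conclusion follows immediately from Proposition \ref{conmap}(ii) with no further work.
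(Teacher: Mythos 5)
Your overall route is exactly the paper's: the corollary is obtained by applying Proposition \ref{conmap}(ii) to the canonical surjection $\pi\colon R\to R/\mathfrak{a}$, which is surjective with kernel $\mathfrak{a}$, so that $\mathtt{X}(R/\mathfrak{a})\cong \mathtt{h}(\mathtt{ker}(\pi))=\mathtt{h}(\mathfrak{a})$. The paper's proof is literally this one-line citation, so your first and third paragraphs are the intended argument.

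The genuine flaw is your middle paragraph. The contraction-ideal property is a \emph{hypothesis} on the spectrum $\mathtt{X}$, carried over from Proposition \ref{conmap} to this corollary; it cannot be derived from the correspondence theorem, and your claim that the bijection $\mathfrak{c}\mapsto\pi\inv(\mathfrak{c})$ ``carries ideals of a given type in $R/\mathfrak{a}$ to ideals of the same type in $R$'' is false for several of the spectra this paper explicitly covers. The correspondence preserves inclusions, but not properties sensitive to the bottom of the ideal lattice or to generators. For $\mathtt{Min}$: take $R=\mathds{Z}$ and $\mathfrak{a}=4\mathds{Z}$; then $2\mathds{Z}/4\mathds{Z}$ is a minimal ideal of $\mathds{Z}_4$, but its contraction $2\mathds{Z}$ is not a minimal ideal of $\mathds{Z}$ --- indeed $\mathtt{Min}(\mathds{Z})=\emptyset$ while $\mathtt{Min}(\mathds{Z}_4)$ is a singleton, so read unconditionally the corollary is not merely unproved but false. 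Similarly for $\mathtt{Prn}$ or $\mathtt{Fgn}$: the contraction of the zero ideal of $R/\mathfrak{a}$ is $\mathfrak{a}$ itself, which need not be principal or finitely generated; and for $\mathtt{Spn}$, contractions of minimal primes of $R/\mathfrak{a}$ are only minimal over $\mathfrak{a}$, not minimal primes of $R$. The repair is simply to read the corollary with the standing hypothesis of Proposition \ref{conmap} (as the paper does): $\mathtt{X}$ satisfies the contraction-ideal property for \emph{all} homomorphisms, in particular for $\pi$, after which your middle paragraph should be deleted and the rest stands as written.
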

 
From Proposition  \ref{conmap}, we get the well-known result that the ideal or structure  spaces $\mathrm{Spec}(R)$ and $\mathrm{Spec}(R/\sqrt{\mathfrak{o}})$ are canonically homeomorphic, and for  $f^*(\mathrm{Spec}(R'))$ to be dense in $\mathrm{Spec}(R),$ it is necessary and sufficient that $\mathrm{ker}(f)\subseteq \mathrm{h}(\mathfrak{o}).$ 

\subsection{Quasi-compactness and separation axioms}\label{sepax}

It is well known that $\mathrm{Spec}(R)$ is always quasi-compact, while, on the other hand, $\mathrm{Min}(R)$ is not always quasi-compact. Incidentally, the quasi-compactness of $\mathrm{Min}(R)$ for rings with no nonzero nilpotent elements is fully characterised in \cite{H88}. In this subsection we shall, among other things, find a condition that is not stringent which is sufficient for any ideal space $\mathrm{X}(R)$ to be quasi-compact.
	  
Also covered in this subsection is the identification of some irreducible subbasic closed sets of ideal spaces;  a result which enables us  to characterise when an ideal space is a $T_1$-space. For an arbitrary $\mathrm{X}(R)$, a subbasic closed set need not contain the element associated with it. We show that each does precisely when $\mathrm{X}(R)$ is sober.

\begin{definition}\label{fulle}
 We say an ideal space has the \emph{partition of unity} property if for every $\mathfrak{a}\in \mathrm{Idl}(R),$ $\mathrm{h}(\mathfrak{a})=\emptyset$ implies $\mathfrak{a}=R.$
\end{definition} 

\begin{proposition}
 For a structure space $\mathrm{X}(R)$ to have the partition of unity property, it is necessary and sufficient that $\mathrm{X}(R)$ contains all maximal ideals of $R$.
\end{proposition}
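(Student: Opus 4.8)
The plan is to prove the biconditional by reformulating the partition-of-unity property in terms of maximal ideals. The key observation is that for any proper ideal $\mathfrak{a}\in\mathtt{Idl}(R)$, the hull $\mathtt{h}(\mathfrak{a})=\{\mathfrak{s}\in\mathtt{X}(R)\mid \mathfrak{a}\subseteq\mathfrak{s}\}$ is empty precisely when $\mathtt{X}(R)$ contains no ideal above $\mathfrak{a}$. Since every proper ideal is contained in a maximal ideal (by Zorn's lemma, using that $R$ has an identity), the condition ``$\mathtt{h}(\mathfrak{a})=\emptyset \Rightarrow \mathfrak{a}=R$'' is really a statement about which maximal ideals are visible to $\mathtt{X}(R)$.

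For the sufficiency direction, suppose $\mathtt{Max}(R)\subseteq\mathtt{X}(R)$. Let $\mathfrak{a}\in\mathtt{Idl}(R)$ with $\mathtt{h}(\mathfrak{a})=\emptyset$, and suppose for contradiction that $\mathfrak{a}\neq R$. Then $\mathfrak{a}$ is proper, so there is a maximal ideal $\mathfrak{m}$ with $\mathfrak{a}\subseteq\mathfrak{m}$. By hypothesis $\mathfrak{m}\in\mathtt{X}(R)$, so $\mathfrak{m}\in\mathtt{h}(\mathfrak{a})$, contradicting $\mathtt{h}(\mathfrak{a})=\emptyset$. Hence $\mathfrak{a}=R$, which is exactly the partition-of-unity property.

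For the necessity direction, I would argue the contrapositive: suppose some maximal ideal $\mathfrak{m}$ is \emph{not} in $\mathtt{X}(R)$, and produce a violation of partition of unity. The natural candidate is to take $\mathfrak{a}=\mathfrak{m}$ itself and examine $\mathtt{h}(\mathfrak{m})=\{\mathfrak{s}\in\mathtt{X}(R)\mid \mathfrak{m}\subseteq\mathfrak{s}\}$. Since $\mathfrak{m}$ is maximal and proper, the only ideals containing $\mathfrak{m}$ are $\mathfrak{m}$ and $R$; as $R\notin\mathtt{X}(R)$ by the standing convention and $\mathfrak{m}\notin\mathtt{X}(R)$ by assumption, we get $\mathtt{h}(\mathfrak{m})=\emptyset$. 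But $\mathfrak{m}\neq R$, so the partition-of-unity property fails with witness $\mathfrak{a}=\mathfrak{m}$.

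The argument is essentially immediate once the maximal-ideal reformulation is in place; the only subtlety, and hence the main point to state carefully, is the use of the fact that every proper ideal lies in a maximal ideal (valid because $R$ is assumed to have an identity), together with the standing convention $R\notin\mathtt{X}(R)$, which is what forces $\mathtt{h}(\mathfrak{m})=\emptyset$ in the necessity direction. I expect no genuine obstacle here; the care lies in invoking these two background facts explicitly rather than in any nontrivial construction.
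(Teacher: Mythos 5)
Your proposal is correct and follows essentially the same route as the paper: sufficiency via ``every proper ideal lies in a maximal ideal, which is in $\mathtt{X}(R)$ by hypothesis,'' and necessity by showing that a missing maximal ideal $\mathfrak{m}$ gives $\mathtt{h}(\mathfrak{m})=\emptyset$ with $\mathfrak{m}\neq R$. Your write-up merely makes explicit the step the paper leaves tacit, namely that $\mathtt{h}(\mathfrak{m})=\emptyset$ because the only ideals containing $\mathfrak{m}$ are $\mathfrak{m}$ and $R$, neither of which lies in $\mathtt{X}(R)$.
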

	
\begin{proof}
\textit{Necessity}.  Suppose $\mathrm{X}(R)$ has the partition of unity property. Let $\mathfrak{m}$ be a maximal ideal of $R$. If $\mathfrak{m}$ were not in $\mathrm{X}(R)$, then we would have $\mathrm{h}(\mathfrak{m}) = \emptyset$, and since $\mathfrak{m} \neq R$, we would have a contradiction.

\textit{Sufficiency}. Suppose $\mathrm{X}(R)$ contains all maximal ideals of $R$. Consider an ideal $\mathfrak{a}$ of $R$ such that $\mathrm{h}(\mathfrak{a}) = \emptyset$. This means that there is no proper ideal of $R$ belonging to $\mathrm{X}(R)$ containing $\mathfrak{a}$, and this forces $\mathfrak{a}$ to be $R$ because every  proper ideal of $R$ is contained in some maximal ideal.
\end{proof} 

\begin{theorem}\label{comp} 
 If an ideal space $\mathrm{X}(R)$ has the partition of unity property, then $\mathrm{X}(R)$ is quasi-compact. 
\end{theorem}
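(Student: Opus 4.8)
The plan is to invoke the Alexander subbase theorem. Since the topology $\omega_{\scriptscriptstyle\mathtt{X}(R)}$ is, by construction, generated by the closed subbase $\mathcal{S}_{\mathtt{h}}=\{\mathtt{h}(\mathfrak{a})\mid \mathfrak{a}\in\mathtt{Idl}(R)\}$, it suffices, by the closed-set (dual) form of that theorem, to verify that every subfamily of $\mathcal{S}_{\mathtt{h}}$ with the finite intersection property has nonempty intersection. Working directly with subbasic closed sets is exactly the tool that lets us avoid reasoning about the full lattice of closed sets, which would be unwieldy for an arbitrary spectrum.

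So I would take a family $\{\mathtt{h}(\mathfrak{a}_{\lambda})\}_{\lambda\in\Lambda}$ of subbasic closed sets having the finite intersection property, and aim to show $\bigcap_{\lambda\in\Lambda}\mathtt{h}(\mathfrak{a}_{\lambda})\neq\emptyset$. By Proposition \ref{muls}(\ref{insum}), for any finite $F\subseteq\Lambda$ we have $\bigcap_{\lambda\in F}\mathtt{h}(\mathfrak{a}_{\lambda})=\mathtt{h}\!\left(\sum_{\lambda\in F}\mathfrak{a}_{\lambda}\right)$, and this is nonempty by hypothesis. Reading the partition of unity property (Definition \ref{fulle}) contrapositively, we conclude $\sum_{\lambda\in F}\mathfrak{a}_{\lambda}\neq R$ for every finite $F$.

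The key step is the passage from finite sums to the full sum. Set $\mathfrak{a}=\sum_{\lambda\in\Lambda}\mathfrak{a}_{\lambda}$. I claim $\mathfrak{a}\neq R$: if instead $\mathfrak{a}=R$, then $1\in\mathfrak{a}$, and since every element of a sum of ideals already lies in the sum of finitely many of the summands, we would have $1\in\sum_{\lambda\in F}\mathfrak{a}_{\lambda}$ for some finite $F$, forcing $\sum_{\lambda\in F}\mathfrak{a}_{\lambda}=R$, contradicting the previous paragraph. With $\mathfrak{a}\neq R$ established, the partition of unity property gives $\mathtt{h}(\mathfrak{a})\neq\emptyset$, and Proposition \ref{muls}(\ref{insum}) applied to the \emph{entire} family yields $\bigcap_{\lambda\in\Lambda}\mathtt{h}(\mathfrak{a}_{\lambda})=\mathtt{h}\!\left(\sum_{\lambda\in\Lambda}\mathfrak{a}_{\lambda}\right)=\mathtt{h}(\mathfrak{a})\neq\emptyset$, which is exactly what the Alexander subbase theorem requires.

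The one point I would state with care is that Proposition \ref{muls}(\ref{insum}) holds for an \emph{arbitrary} (possibly infinite) index set, so that $\bigcap_{\lambda\in\Lambda}\mathtt{h}(\mathfrak{a}_{\lambda})=\mathtt{h}(\sum_{\lambda\in\Lambda}\mathfrak{a}_{\lambda})$ is legitimate; this is what makes the whole argument collapse so cleanly. Everything else is the routine finite-support observation for sums of ideals combined with the partition of unity hypothesis. I do not expect a genuine obstacle here: the only thing to get right is the FIP bookkeeping and the correct direction of the partition of unity implication (used to deduce $\mathtt{h}(\mathfrak{a})\neq\emptyset$ from $\mathfrak{a}\neq R$), which is the contrapositive of Definition \ref{fulle}.
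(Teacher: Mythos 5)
Your proof is correct and is essentially the paper's own argument in contrapositive form: both reduce quasi-compactness via the Alexander subbase theorem to the identity of Proposition \ref{muls}(\ref{insum}) for arbitrary families, the fact that $1$ in a sum of ideals already lies in a finite subsum, and the partition of unity property. One small mislabel: your deduction $\mathtt{h}\bigl(\sum_{\lambda\in F}\mathfrak{a}_{\lambda}\bigr)\neq\emptyset \Rightarrow \sum_{\lambda\in F}\mathfrak{a}_{\lambda}\neq R$ is not the contrapositive of Definition \ref{fulle} but rather the unconditional fact $\mathtt{h}(R)=\emptyset$ from Proposition \ref{muls}(\ref{hkd}) (since $R\notin\mathtt{X}(R)$); the genuine contrapositive of Definition \ref{fulle} is what you correctly invoke at the end to pass from $\mathfrak{a}\neq R$ to $\mathtt{h}(\mathfrak{a})\neq\emptyset$.
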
 
  
\begin{proof}   
Let  $\{C_{ \alpha}\}_{\alpha \in \Lambda}$ be a family of subbasic closed sets of an ideal space $\mathrm{X}(R)$   such that $\bigcap_{\alpha\in \Lambda}C_{ \alpha}=\emptyset.$ Let $\{\mathfrak{a}_{ \alpha}\}_{\alpha \in \Lambda}$ be a family of ideals of $\mathrm{Idl}(R)$ such  that $\forall \alpha \in \Lambda,$  $C_{ \alpha}=\mathrm{h}(\mathfrak{a}_{ \alpha}).$  Since $\bigcap_{\alpha \in \Lambda}\mathrm{h}(\mathfrak{a}_{ \alpha})=\mathrm{h}\left(\sum_{\alpha \in \Lambda}\mathfrak{a}_{ \alpha}\right),$ we get  $\mathrm{h}\left(\sum_{\alpha \in \Lambda}\mathfrak{a}_{ \alpha}\right)=\emptyset,$ and that by Definition  \ref{fulle} gives $ \sum_{\alpha \in \Lambda}\mathfrak{a}_{ \alpha}=R.$ Then, in particular, we obtain $1=\sum_{\alpha_i\in \Lambda}a_{ \alpha_i},$ where $a_{ \alpha_i}\in \mathfrak{a}_{\alpha_i}$ and $a_{ \alpha_i}\neq 0$ for $i=1, \ldots, n$. This implies    $R=\sum_{  i \, =1}^{ n}\mathfrak{a}_{\alpha_i}.$ Therefore,   $\bigcap_{ i\,=1}^{ n}C_{ \alpha_i}=\emptyset,$ and hence by Alexander subbase theorem, $\mathrm{X}(R)$ is quasi-compact.  
\end{proof}  

Since  for a topological space to be $T_0$, it is necessary and sufficient that its specialisation order is a partial order (see \cite{P94}), any ideal space $\mathrm{X}(R)$ has the same property.  
 
\begin{theorem}
 Every ideal space $\mathrm{X}(R)$ is   $T_{ 0}.$ 
\label{ct0t1}  
\end{theorem}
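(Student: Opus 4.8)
The plan is to prove that every ideal space $\mathtt{X}(R)$ is $T_0$ by exploiting the hint dropped just before the statement: a topological space is $T_0$ if and only if its specialisation order is a genuine partial order (i.e.\ antisymmetric), so it suffices to check that the specialisation preorder on $\mathtt{X}(R)$ is antisymmetric. First I would recall that in any topological space the specialisation preorder is defined by $\mathfrak{s}\preccurlyeq\mathfrak{t}$ precisely when $\mathfrak{s}\in\overline{\{\mathfrak{t}\}}$, equivalently when every closed set containing $\mathfrak{t}$ also contains $\mathfrak{s}$. Since $\mathcal{S}_{\mathtt{h}}=\mathcal{C}_{\mathtt{h}}$ is a closed subbase generating the $\omega_{\scriptscriptstyle\mathtt{X}(R)}$ topology, the closure of a point is the intersection of all subbasic closed sets containing it, and so $\mathfrak{s}\in\overline{\{\mathfrak{t}\}}$ holds exactly when $\mathfrak{s}$ lies in every $\mathtt{h}(\mathfrak{a})$ that contains $\mathfrak{t}$.

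The key computation is to identify this condition with an inclusion of ideals. For $\mathfrak{t}\in\mathtt{X}(R)$ we have $\mathfrak{t}\in\mathtt{h}(\mathfrak{a})$ if and only if $\mathfrak{a}\subseteq\mathfrak{t}$; thus the smallest subbasic closed set containing $\mathfrak{t}$ is $\mathtt{h}(\mathfrak{t})$ itself, obtained by taking $\mathfrak{a}=\mathfrak{t}$ (recall $\mathfrak{t}\in\mathtt{X}(R)\subseteq\mathtt{Idl}(R)$, so $\mathtt{h}(\mathfrak{t})$ is legitimately a subbasic closed set, and $\mathfrak{t}\in\mathtt{h}(\mathfrak{t})$ since $\mathfrak{t}\subseteq\mathfrak{t}$). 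Hence $\mathfrak{s}\in\overline{\{\mathfrak{t}\}}$ is equivalent to $\mathfrak{s}\in\mathtt{h}(\mathfrak{t})$, which is equivalent to $\mathfrak{t}\subseteq\mathfrak{s}$. In other words, the specialisation order on the ideal space is simply reverse inclusion of ideals: $\mathfrak{s}\preccurlyeq\mathfrak{t}$ iff $\mathfrak{t}\subseteq\mathfrak{s}$.

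With this identification, antisymmetry is immediate: if $\mathfrak{s}\preccurlyeq\mathfrak{t}$ and $\mathfrak{t}\preccurlyeq\mathfrak{s}$, then $\mathfrak{t}\subseteq\mathfrak{s}$ and $\mathfrak{s}\subseteq\mathfrak{t}$, whence $\mathfrak{s}=\mathfrak{t}$. Since $\subseteq$ is a partial order on $\mathtt{Idl}(R)$, its reverse is also a partial order, so the specialisation order is antisymmetric and $\mathtt{X}(R)$ is $T_0$ by the cited characterisation. I do not expect any genuine obstacle here: the only point requiring a little care is justifying that the closure of a point is computed from the subbase $\mathcal{S}_{\mathtt{h}}$ rather than merely from a base, so I would either invoke the standard fact that $\overline{\{\mathfrak{t}\}}$ is the intersection of all closed sets containing $\mathfrak{t}$ and observe that $\mathtt{h}(\mathfrak{t})$ is itself one such closed set lying inside every other subbasic closed set containing $\mathfrak{t}$, or alternatively give the direct argument that if $\mathfrak{s}\neq\mathfrak{t}$ then $\mathfrak{t}\not\subseteq\mathfrak{s}$ or $\mathfrak{s}\not\subseteq\mathfrak{t}$, and in either case one of $\mathtt{h}(\mathfrak{s})$, $\mathtt{h}(\mathfrak{t})$ is a subbasic (hence open-complemented) set separating the two points in the $T_0$ sense.
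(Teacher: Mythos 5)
Your proposal is correct and takes essentially the same approach as the paper: the paper's entire proof is the sentence preceding the theorem, which invokes the characterisation of $T_0$ spaces as those whose specialisation preorder is a partial order. Your computation that $\overline{\{\mathfrak{t}\}}=\mathtt{h}(\mathfrak{t})$ for every $\mathfrak{t}\in\mathtt{X}(R)$ (so that specialisation is reverse inclusion of ideals, hence antisymmetric) merely fills in the verification the paper leaves implicit, and is in fact the content of Theorem~\ref{irrc} applied to the points of the space.
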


Now here is the result that identifies some irreducible subbasic closed sets. As mentioned above, we will put it to good use further below when we deal with $T_1$ axiom and connectedness. 

\begin{theorem}\label{irrc}
 For every ideal space $\mathrm{X}(R)$, the subbasic closed sets $\{\mathrm{h}(\mathfrak{a})\mid \mathfrak{a}\in \mathrm{h}(\mathfrak{a})\}$ are irreducible. 
\end{theorem}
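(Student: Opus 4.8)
The plan is to show that, for each $\mathfrak{a}$ with $\mathfrak{a}\in\mathtt{h}(\mathfrak{a})$, the subbasic closed set $\mathtt{h}(\mathfrak{a})$ is nothing but the closure of the singleton $\{\mathfrak{a}\}$ in the ideal space $\mathtt{X}(R)$, and then to invoke the elementary fact that the closure of a single point is always irreducible. First observe how to read the indexing condition: $\mathfrak{a}\in\mathtt{h}(\mathfrak{a})$ means $\mathfrak{a}\in\mathtt{X}(R)$ together with $\mathfrak{a}\subseteq\mathfrak{a}$, the latter being automatic; so the sets in question are exactly the $\mathtt{h}(\mathfrak{a})$ with $\mathfrak{a}\in\mathtt{X}(R)$, and each is nonempty since it contains $\mathfrak{a}$.

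The core of the argument is the closure computation. I would use the description of the closure operator relative to the closed subbase $\mathcal{S}_{\mathtt{h}}=\{\mathtt{h}(\mathfrak{b})\mid\mathfrak{b}\in\mathtt{Idl}(R)\}$. Suppose $\mathfrak{s}\notin\overline{\{\mathfrak{a}\}}$; then some basic open neighbourhood of $\mathfrak{s}$, a finite intersection $\bigcap_{i=1}^{n}\bigl(\mathtt{X}(R)\setminus\mathtt{h}(\mathfrak{b}_i)\bigr)$ of subbasic open sets, misses $\mathfrak{a}$. This forces $\mathfrak{a}\in\mathtt{h}(\mathfrak{b}_j)$ for some $j$ while $\mathfrak{s}\notin\mathtt{h}(\mathfrak{b}_j)$; so there is a subbasic closed set containing $\mathfrak{a}$ but not $\mathfrak{s}$. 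Conversely, any subbasic closed set containing $\mathfrak{a}$ contains $\overline{\{\mathfrak{a}\}}$. Hence $\mathfrak{s}\in\overline{\{\mathfrak{a}\}}$ iff $\mathfrak{s}$ lies in every subbasic closed set containing $\mathfrak{a}$. Since $\mathfrak{a}\in\mathtt{X}(R)$, the condition $\mathfrak{a}\in\mathtt{h}(\mathfrak{b})$ is equivalent to $\mathfrak{b}\subseteq\mathfrak{a}$, and for each such $\mathfrak{b}$ the order-reversal of $\mathtt{h}$ (Proposition \ref{muls}(\ref{hkd})) gives $\mathtt{h}(\mathfrak{a})\subseteq\mathtt{h}(\mathfrak{b})$; as $\mathtt{h}(\mathfrak{a})$ itself occurs in the family (taking $\mathfrak{b}=\mathfrak{a}$), the intersection collapses:
\[
\overline{\{\mathfrak{a}\}}=\bigcap\{\mathtt{h}(\mathfrak{b})\mid \mathfrak{a}\in\mathtt{h}(\mathfrak{b})\}=\bigcap\{\mathtt{h}(\mathfrak{b})\mid \mathfrak{b}\subseteq\mathfrak{a}\}=\mathtt{h}(\mathfrak{a}).
\]

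Finally I would record the general fact that a point-closure is irreducible: if $\overline{\{\mathfrak{a}\}}\subseteq C_1\cup C_2$ with $C_1,C_2$ closed, then $\mathfrak{a}$ lies in one of them, say $C_1$, and since $C_1$ is closed we get $\overline{\{\mathfrak{a}\}}\subseteq C_1$; as $\overline{\{\mathfrak{a}\}}$ is nonempty, it is irreducible. Combined with the displayed equality, this gives that $\mathtt{h}(\mathfrak{a})$ is irreducible, proving the theorem. The step I expect to be the main obstacle is the closure computation itself, because $\mathcal{S}_{\mathtt{h}}$ is only a closed \emph{subbase} and not a base; one must therefore pass through finite unions of subbasic closed sets and argue that the single point $\mathfrak{a}$, once separated from $\mathfrak{s}$ by a basic open set, is already captured by one member $\mathtt{h}(\mathfrak{b}_j)$ of that finite family. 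Once this is in place, the remainder is purely order-theoretic bookkeeping.
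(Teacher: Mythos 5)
Your proof is correct and takes essentially the same route as the paper's: both establish $\mathtt{h}(\mathfrak{a})=\overline{\{\mathfrak{a}\}}$ by exploiting the key fact that the single point $\mathfrak{a}$, when contained in a finite union of subbasic closed sets, must lie in one member $\mathtt{h}(\mathfrak{b})$ of it, whence order-reversal of $\mathtt{h}$ places all of $\mathtt{h}(\mathfrak{a})$ inside that member, and both then conclude by irreducibility of point closures. The only cosmetic difference is that you organise the closure computation via basic open neighbourhoods (yielding $\overline{\{\mathfrak{a}\}}=\bigcap\{\mathtt{h}(\mathfrak{b})\mid \mathfrak{a}\in\mathtt{h}(\mathfrak{b})\}$), which neatly sidesteps the paper's case distinction on whether $\overline{\{\mathfrak{a}\}}$ is the whole space.
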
 
 
\begin{proof} 
We show that $\mathrm{h}(\mathfrak{a})=\overline{\{\mathfrak{a}\}}$ for every $\mathfrak{a}$ for which $\mathfrak{a}\in\mathrm{h}(\mathfrak{a})$, which will prove the lemma. 	Since $\overline{\{\mathfrak{a}\}}$ is the smallest closed set containing $\mathfrak{a}$, and since $\mathrm{h}(\mathfrak{a})$ is a closed set containing $\mathfrak{a}$, we immediately have  $\overline{\{\mathfrak{a}\}}\subseteq \mathrm{h}(\mathfrak{a})$. 
For the reverse inclusion, we first consider the (possible) case that $\overline{\{\mathfrak{a}\}}= \mathrm{X}(R)$. Then  from the string of containments
\[ 
\mathrm{X}(R)=\overline{\{\mathfrak{a}\}}\subseteq \mathrm{h}(\mathfrak{a})\subseteq \mathrm{X}(R),
\] 
we obtain $\mathrm{h}(\mathfrak{a})=\overline{\{\mathfrak{a}\}}$. Now suppose that $\overline{\{\mathfrak{a}\}}\neq \mathrm{X}(R)$. Being a closed set, $\overline{\{\mathfrak{a}\}}$ is expressible as an intersection of some finite unions of subbasic closed sets. We can therefore find an  index set,  $\Omega$, such that,  for each $\alpha\in\Omega$, there is a positive integer $n_{\alpha}$ and ideals $\mathfrak{a}_{\alpha 1},\dots, \mathfrak{a}_{\alpha n_\alpha}$ of $\mathrm{Idl}(R)$ such that 
$$
\overline{\{\mathfrak{a}\}}={\bigcap_{\alpha\in\Omega}}\left({\bigcup_{ i\,=1}^{ n_\alpha}}\mathrm{h}(\mathfrak{a}_{\alpha i})\right).
$$
  
Since 
$\overline{\{\mathfrak{a}\}}$ is not the whole space, without loss of generality, we assume that for each $\alpha$, ${\bigcup_{ i\,=1}^{ n_\alpha}}\mathrm{h}(\mathfrak{a}_{\alpha i})$ is non-empty. Therefore, for each $\alpha$, $\mathfrak{a}\in   {\bigcup_{ i\,=1}^{ n_\alpha}}\mathrm{h}(\mathfrak{a}_{\alpha i})$, from which we deduce that $\mathrm{h}(\mathfrak{a})\subseteq {\bigcup_{ i=1}^{ n_\alpha}}\mathrm{h}(\mathfrak{a}_{\alpha i})$, hence $\mathrm{h}(\mathfrak{a})\subseteq \overline{\{\mathfrak{a}\}}$. Consequently, $\mathrm{h}(\mathfrak{a})=\overline{\{\mathfrak{a}\}}$; an irreducible closed set.	
\end{proof}     
 
\begin{corollary}\label{spiir}
 Every non-empty subbasic closed subset of $\mathrm{Prp}(R)$ is irreducible.
\end{corollary}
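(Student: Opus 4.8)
The plan is to recognise this as an immediate specialisation of Theorem \ref{irrc}, so the only real work is to verify that the hypothesis of that theorem is automatically met for \emph{every} non-empty subbasic closed set of $\mathtt{Prp}(R)$. Recall that a subbasic closed set of the ideal space $\mathtt{Prp}(R)$ is, by construction, of the form $\mathtt{h}(\mathfrak{a})$ for some $\mathfrak{a}\in\mathtt{Idl}(R)$, and that the irreducible ones singled out in Theorem \ref{irrc} are exactly those for which $\mathfrak{a}\in\mathtt{h}(\mathfrak{a})$. Since $\mathfrak{a}\subseteq\mathfrak{a}$ always holds, the condition $\mathfrak{a}\in\mathtt{h}(\mathfrak{a})$ is equivalent to the single requirement that $\mathfrak{a}$ itself be a member of the underlying spectrum, here $\mathfrak{a}\in\mathtt{Prp}(R)$, i.e.\ that $\mathfrak{a}$ be a proper ideal.

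First I would fix a non-empty subbasic closed set and write it as $\mathtt{h}(\mathfrak{a})$ with $\mathfrak{a}\in\mathtt{Idl}(R)$. Non-emptiness furnishes some $\mathfrak{s}\in\mathtt{Prp}(R)$ with $\mathfrak{a}\subseteq\mathfrak{s}$. Because $\mathfrak{s}$ is proper we have $\mathfrak{a}\subseteq\mathfrak{s}\subsetneq R$, and so $\mathfrak{a}$ is itself a proper ideal; equivalently, $\mathfrak{a}\in\mathtt{Prp}(R)$. This is the crux of the argument: the defining feature of the spectrum $\mathtt{Prp}(R)$ is that it contains \emph{all} proper ideals, so any ideal lying below a point of the space is again a point of the space, which is precisely what forces the ``tautology'' $\mathfrak{a}\in\mathtt{h}(\mathfrak{a})$.

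Having established $\mathfrak{a}\in\mathtt{Prp}(R)$, and hence $\mathfrak{a}\in\mathtt{h}(\mathfrak{a})$, I would conclude directly by invoking Theorem \ref{irrc}, which asserts that every subbasic closed set $\mathtt{h}(\mathfrak{a})$ satisfying $\mathfrak{a}\in\mathtt{h}(\mathfrak{a})$ is irreducible. (Indeed, the proof of that theorem identifies $\mathtt{h}(\mathfrak{a})=\overline{\{\mathfrak{a}\}}$, the closure of a single point, which is always irreducible.) There is essentially no obstacle to overcome here beyond the one-line observation that non-emptiness of $\mathtt{h}(\mathfrak{a})$ forces $\mathfrak{a}$ to be proper; the entire content has already been deposited in Theorem \ref{irrc}, and what distinguishes $\mathtt{Prp}(R)$ from a general $\mathtt{X}(R)$ is only that this properness condition is automatically inherited from the surrounding non-empty hull, so that \emph{every} non-empty subbasic closed set, not merely a special family, is irreducible.
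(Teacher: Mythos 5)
Your proof is correct and takes essentially the same approach as the paper: both reduce the corollary to Theorem \ref{irrc} by observing that a non-empty $\mathtt{h}(\mathfrak{a})$ forces $\mathfrak{a}$ to be contained in a proper ideal, hence proper, hence $\mathfrak{a}\in\mathtt{Prp}(R)$ and so $\mathfrak{a}\in\mathtt{h}(\mathfrak{a})$. The paper states this membership in one line; you simply make the properness argument explicit.
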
 

\begin{proof}
Since for every non-empty subbasic closed subset $\mathrm{h}(\mathfrak{a})$ of $\mathrm{Prp}(R)$, the ideal $\mathfrak{a}$ is also in $\mathrm{Prp}(R),$ the proof now follows from Theorem \ref{irrc}.  
\end{proof}

It is well known that for $\mathrm{Spec}(R)$ to be $T_1$, it is necessary and sufficient that $\mathrm{Spec}(R)$ coincides with $\mathrm{Max}(R)$. In case of an ideal space $\mathrm{X}(R)$ to be $T_1$, the condition is slightly different.
 
\begin{theorem}
 For an ideal space $\mathrm{X}(R)$ to be $T_1,$ it is necessary and sufficient that $\mathrm{X}(R)\subseteq\mathrm{Max}(R)$. 
\end{theorem}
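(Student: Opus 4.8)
The plan is to reduce the whole question to the computation of the closure of a singleton and then read off the $T_1$ axiom as the statement that this closure is trivial. First I would record the key identity that for every point $\mathfrak{s}\in\mathtt{X}(R)$ one has $\overline{\{\mathfrak{s}\}}=\mathtt{h}(\mathfrak{s})$. This is essentially Theorem \ref{irrc}: since $\mathfrak{s}\subseteq\mathfrak{s}$ we have $\mathfrak{s}\in\mathtt{h}(\mathfrak{s})$, and that theorem shows precisely that $\mathtt{h}(\mathfrak{a})=\overline{\{\mathfrak{a}\}}$ whenever $\mathfrak{a}\in\mathtt{h}(\mathfrak{a})$. One can also see it directly: the closure of $\{\mathfrak{s}\}$ in the lower topology is the intersection of all subbasic closed sets containing $\mathfrak{s}$, namely $\bigcap\{\mathtt{h}(\mathfrak{a})\mid\mathfrak{a}\subseteq\mathfrak{s}\}$, which by Proposition \ref{muls}(\ref{insum}) equals $\mathtt{h}\!\left(\sum_{\mathfrak{a}\subseteq\mathfrak{s}}\mathfrak{a}\right)=\mathtt{h}(\mathfrak{s})$, the last step because $\mathfrak{s}$ is the largest ideal contained in $\mathfrak{s}$.

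With this in hand, $\mathtt{X}(R)$ is $T_1$ if and only if every singleton is closed, that is, if and only if $\mathtt{h}(\mathfrak{s})=\{\mathfrak{s}\}$ for every $\mathfrak{s}\in\mathtt{X}(R)$. Since $\mathtt{h}(\mathfrak{s})=\{\mathfrak{t}\in\mathtt{X}(R)\mid\mathfrak{s}\subseteq\mathfrak{t}\}$, this says exactly that no member of $\mathtt{X}(R)$ is properly contained in another member of $\mathtt{X}(R)$; in other words, $T_1$ is equivalent to $\mathtt{X}(R)$ being an antichain under inclusion. This reformulation is what I would then convert into the two implications of the theorem.

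For sufficiency I would note that if $\mathtt{X}(R)\subseteq\mathtt{Max}(R)$ then $\mathtt{X}(R)$ is automatically an antichain, because two distinct maximal ideals are never comparable (if $\mathfrak{m}_1\subseteq\mathfrak{m}_2$ with both proper and $\mathfrak{m}_1$ maximal, then $\mathfrak{m}_1=\mathfrak{m}_2$); hence $\mathtt{h}(\mathfrak{s})=\{\mathfrak{s}\}$ and the space is $T_1$. For necessity I would take $\mathfrak{s}\in\mathtt{X}(R)$ and use that $\mathfrak{s}$, being proper, is contained in some maximal ideal $\mathfrak{m}$; the antichain property then forces $\mathfrak{s}=\mathfrak{m}$, so $\mathfrak{s}$ is maximal and $\mathtt{X}(R)\subseteq\mathtt{Max}(R)$.

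The step requiring the most care is this last one. The topological hypothesis only detects comparabilities \emph{inside} $\mathtt{X}(R)$, so to upgrade ``$\mathfrak{s}$ is maximal among the members of $\mathtt{X}(R)$'' to ``$\mathfrak{s}$ is a maximal ideal of $R$'' one needs the maximal ideal $\mathfrak{m}\supseteq\mathfrak{s}$ to itself lie in $\mathtt{X}(R)$. For $\mathtt{Spc}(R)$ this is free, since $\mathtt{Max}(R)\subseteq\mathtt{Spc}(R)$ and $\mathfrak{m}$ is then a prime above $\mathfrak{s}$; in the general statement it is precisely the presence of the relevant maximal ideals in the spectrum that the necessity argument relies on, and I would make sure to state explicitly where this is invoked rather than let it pass silently.
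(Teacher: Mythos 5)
Your proposal follows essentially the same route as the paper's proof: both rest on Theorem \ref{irrc} to identify $\overline{\{\mathfrak{s}\}}=\mathtt{h}(\mathfrak{s})$, read $T_1$ as ``all singletons are closed,'' get sufficiency from the incomparability of maximal ideals, and get necessity by choosing a maximal ideal $\mathfrak{m}\supseteq\mathfrak{s}$ and forcing $\mathfrak{m}=\mathfrak{s}$. Your intermediate reformulation ($T_1$ if and only if $\mathtt{X}(R)$ is an antichain under inclusion) is a clean repackaging of the same content, and your direct computation of the closure of a singleton as the intersection of the subbasic closed sets containing it is valid: for singletons this is a general fact about subbase-generated topologies, and it is in effect what the proof of Theorem \ref{irrc} verifies.

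The caveat you raise about the necessity step is not excess caution; it is a genuine gap, and it is present in the paper's own proof. The paper asserts $\mathfrak{m}\in\mathtt{h}(\mathfrak{a})$ for a maximal ideal $\mathfrak{m}\supseteq\mathfrak{a}$, but by Definition \ref{hkm} the set $\mathtt{h}(\mathfrak{a})$ consists only of members of $\mathtt{X}(R)$, so this step silently assumes $\mathfrak{m}\in\mathtt{X}(R)$, which is nowhere guaranteed. Without a hypothesis of this kind the necessity direction is false as stated: for $R=\mathds{Z}$, the spectrum $\mathtt{Spn}(\mathds{Z})=\{\mathfrak{o}\}$ (a spectrum the paper itself lists, and one satisfying \eqref{mic}) is a one-point and hence $T_1$ ideal space, yet $\mathfrak{o}\notin\mathtt{Max}(\mathds{Z})$. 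So the theorem needs an additional assumption --- for instance that $\mathtt{X}(R)$ contains all maximal ideals of $R$ (the same condition as in the partition-of-unity property), or at least one maximal ideal above each of its members --- and your insistence on stating explicitly where this is invoked is exactly right. Everything else in your argument (the closure identity, the antichain equivalence, and the sufficiency direction) is unconditional and correct.
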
  

\begin{proof} 
{\em Necessity}. Let $\mathfrak{a}\in \mathrm{X}(R)$. Then $\mathfrak{a}\in\mathrm{h}(\mathfrak{a})$, and so, by Theorem \ref{irrc}, $\overline{\{\mathfrak{a}\}}=\mathrm{h}(\mathfrak{a})$. Let $\mathfrak{m}$ be a maximal ideal with $\mathfrak{a}\subseteq\mathfrak{m}$. Then   $\mathfrak{m}\in 	\mathrm{h}(\mathfrak{a})=\overline{\{\mathfrak{a}\}} = \{\mathfrak{a}\}$, the last part because $\mathrm{X}(R)$ is $T_{ 1}$. Therefore $\mathfrak{m}=\mathfrak{a}$, showing that $\mathrm{X}(R)\subseteq \mathrm{Max}(R)$.  

{\em Sufficiency}. In $\mathrm{Max}(R)$, $\mathrm{h}(\mathfrak{m})=\{\mathfrak{m}\}$ for every maximal ideal $\mathfrak{m}$, so that $\mathfrak{m}\in \mathrm{h}(\mathfrak{m})$, and hence, by Theorem \ref{irrc}, $\overline{\{\mathfrak{m}\}}=\{\mathfrak{m}\}$, showing that the space is $T_{ 1}$.
\end{proof} 

\begin{corollary}
 Let $R$ be a Noetherian ring. If $\mathrm{X}(R)$ is a discrete space then $R$ is Artinian.  
\end{corollary}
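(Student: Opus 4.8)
The plan is to reduce the statement to the preceding $T_1$-characterisation and then invoke the classical fact that a Noetherian ring of Krull dimension zero is Artinian. First I would observe that a discrete space is, in particular, a $T_1$-space. Hence, by the theorem immediately preceding this corollary, the discreteness of $\mathtt{X}(R)$ yields $\mathtt{X}(R)\subseteq\mathtt{Max}(R)$; that is, every ideal belonging to the spectrum is a maximal ideal of $R$.

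Next I would translate this inclusion into a statement about the Krull dimension of $R$. Provided the spectrum detects all prime ideals---as $\mathtt{Spc}(R)$ does, and as one should assume here---the chain $\mathtt{Spc}(R)\subseteq\mathtt{X}(R)\subseteq\mathtt{Max}(R)$ forces every prime ideal of $R$ to be maximal, so that $R$ has Krull dimension zero. Finally, I would appeal to the standard characterisation that a commutative ring is Artinian precisely when it is Noetherian and of Krull dimension zero; since $R$ is Noetherian by hypothesis and we have just shown $\dim R=0$, it follows that $R$ is Artinian.

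The step I expect to be the crux is the passage from $\mathtt{X}(R)\subseteq\mathtt{Max}(R)$ to ``every prime ideal of $R$ is maximal'', for it is exactly here that one needs the spectrum to see all of $\mathtt{Spc}(R)$. This hypothesis cannot be omitted: for instance $\mathds{Z}_{(p)}$ is Noetherian but not Artinian, yet $\mathtt{Max}(\mathds{Z}_{(p)})$ is a single point and so is trivially discrete. Thus the delicate point of the argument lies in ensuring that the spectrum under consideration contains the prime spectrum; once that is in force, the chain ``discrete $\Rightarrow$ $T_1$ $\Rightarrow$ every prime maximal $\Rightarrow$ $\dim R=0$ $\Rightarrow$ Artinian'' delivers the conclusion.
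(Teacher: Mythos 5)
Your argument is exactly the one the paper intends: the corollary appears without proof immediately after the $T_1$ theorem, and the implicit derivation is precisely your chain ``discrete $\Rightarrow$ $T_1$ $\Rightarrow$ $\mathtt{X}(R)\subseteq\mathtt{Max}(R)$ $\Rightarrow$ $\dim R=0$ $\Rightarrow$ Artinian'', the last step being the classical characterisation of Artinian rings as Noetherian rings of Krull dimension zero. More importantly, the caveat you raise is not a quibble but a genuine correction. The passage from $\mathtt{X}(R)\subseteq\mathtt{Max}(R)$ to $\dim R=0$ needs $\mathtt{Spc}(R)\subseteq\mathtt{X}(R)$, and since the paper lets $\mathtt{X}(R)$ denote an arbitrary spectrum, the corollary as literally stated is false: for $R=\mathds{Z}_{(p)}$ (or any Noetherian local domain of positive dimension) the spectrum $\mathtt{X}(R)=\mathtt{Max}(R)$ is a one-point, hence discrete, ideal space, while $R$ is Noetherian and not Artinian. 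So the corollary is valid only for spectra containing all prime ideals (e.g.\ $\mathtt{Spc}(R)$ itself, $\mathtt{Rad}(R)$, $\mathtt{Irs}(R)$, $\mathtt{Prp}(R)$), which is exactly the hypothesis your proof isolates; under that hypothesis your argument is complete and correct.
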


Although every Noetherian topological
space can be written as a finite union of non-empty irreducible closed subsets, 
but $\mathrm{Prp}(R)$ always has this property irrespective of $R$ being Noetherian and hence $\mathrm{Prp}(R)$ being Noetherian. This follows from the fact that $\mathrm{h}(\mathfrak{o})$ is an irreducible closed subset in $\mathrm{Prp}(R)$ (see Corollary \ref{spiir}). It is quite interesting to study ideals space of Noetherian rings, since they are a fundamental tool in algebraic geometry and algebraic number theory. Further investigations in this circle of ideas will be given in \cite{FGS22}, where, in particular, Notherian ideal spaces will be characterised.
 
If $C$ is an irreducible closed subset of  a topological space $Y$ and $\mathcal{S}$ is a closed subbase of $Y$, then it is well known (see \cite[Section 7.2, p.\,308]{H73}) that $C$ is the intersection of members of $\mathcal{S}.$ For an ideal space $\mathrm{X}(R)$, we get more. 
  
\begin{proposition}\label{ircs} 
 If $C$ is a non-empty irreducible closed subset of an ideal space $\mathrm{X}(R)$, then $C\in \mathcal{S}_{\mathrm{h}}.$ 
\end{proposition} 

Next we characterise sober spaces.
 
\begin{theorem}\label{sob}  
 For an ideal space $\mathrm{X}(R)$ to be sober, it is necessary and sufficient that every non-empty irreducible subbasic closed set $\mathrm{h}(\mathfrak{a})$  contains $\sqrt[\mathrm X]{\mathfrak a}.$ 
\end{theorem}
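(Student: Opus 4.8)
The plan is to reduce soberness to a statement about kernels. Recall that a space is \emph{sober} when every non-empty irreducible closed set is the closure of a unique point; since every ideal space is $T_0$ by Theorem~\ref{ct0t1}, uniqueness is automatic, so the entire content is the \emph{existence} of a generic point. The two facts I would lean on are Theorem~\ref{irrc}, which gives $\mathtt{h}(\mathfrak{s})=\overline{\{\mathfrak{s}\}}$ whenever $\mathfrak{s}\in\mathtt{X}(R)$ (equivalently $\mathfrak{s}\in\mathtt{h}(\mathfrak{s})$), and Lemma~\ref{ircs}, which tells me that a non-empty irreducible closed set $C$ lies in $\mathcal{S}_{\mathtt{h}}$, i.e.\ $C=\mathtt{h}(\mathfrak{b})$ for some ideal $\mathfrak{b}$. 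Before starting, I would fix the canonical representative: given such a $C$, put $\mathfrak{a}=\mathtt{k}(C)=\cap C$. Using Definition~\ref{grad} and Proposition~\ref{hrx}(iii) one has $\mathfrak{a}=\sqrt[X]{\mathfrak{b}}$ and $\mathtt{h}(\mathfrak{a})=\mathtt{h}(\sqrt[X]{\mathfrak{b}})=\mathtt{h}(\mathfrak{b})=C$, so that the phrase ``$\mathtt{h}(\mathfrak{a})$ contains $\mathfrak{a}$'' becomes the concrete condition $\cap C\in\mathtt{X}(R)$.

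For \emph{sufficiency}, I would take a non-empty irreducible closed $C$, write $C=\mathtt{h}(\mathfrak{a})$ with $\mathfrak{a}=\cap C$ as above, and invoke the hypothesis to get $\mathfrak{a}\in\mathtt{h}(\mathfrak{a})=C$, i.e.\ $\mathfrak{a}\in\mathtt{X}(R)$. Then Theorem~\ref{irrc} applies to $\mathfrak{a}$ and yields $C=\mathtt{h}(\mathfrak{a})=\overline{\{\mathfrak{a}\}}$, exhibiting $\mathfrak{a}$ as a generic point; uniqueness is free from $T_0$, so $\mathtt{X}(R)$ is sober.

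For \emph{necessity}, I would assume $\mathtt{X}(R)$ sober and take a non-empty irreducible closed $C=\mathtt{h}(\mathfrak{a})$. Soberness provides a generic point $\mathfrak{s}\in\mathtt{X}(R)$ with $C=\overline{\{\mathfrak{s}\}}$, and since $\mathfrak{s}\in\mathtt{X}(R)$, Theorem~\ref{irrc} rewrites this as $\mathtt{h}(\mathfrak{a})=\mathtt{h}(\mathfrak{s})$. Applying the kernel map and Proposition~\ref{hrx}(ii) then gives $\cap C=\mathtt{kh}(\mathfrak{a})=\mathtt{kh}(\mathfrak{s})=\sqrt[X]{\mathfrak{s}}=\mathfrak{s}$, so the kernel of $C$ is the generic point itself; in particular $\cap C\in\mathtt{X}(R)$ and $\cap C\in C$, which is exactly the asserted condition.

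The main obstacle I anticipate is purely a matter of bookkeeping about representatives: the literal reading ``for \emph{every} $\mathfrak{a}$ with $\mathtt{h}(\mathfrak{a})$ irreducible, $\mathfrak{a}\in\mathtt{h}(\mathfrak{a})$'' is false even for the sober space $\mathtt{Spc}(\mathds{Z})$ (take $\mathfrak{a}=4\mathds{Z}$, whose hull is the single prime $2\mathds{Z}$ yet which is not itself prime). The proof must therefore consistently use the kernel representative $\mathfrak{a}=\cap C$, and the one genuinely substantive step is the identification, in the necessity direction, of this kernel with the generic point supplied by soberness — everything else is a direct application of Theorem~\ref{irrc}, Lemma~\ref{ircs}, and Proposition~\ref{hrx}.
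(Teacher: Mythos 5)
Your proof is correct, and its sufficiency half is in substance the paper's own argument: both use Lemma~\ref{ircs} to present a non-empty irreducible closed set as some $\mathtt{h}(\mathfrak{a})$, Theorem~\ref{irrc} to conclude $\mathtt{h}(\mathfrak{a})=\overline{\{\mathfrak{a}\}}$ once $\mathfrak{a}\in\mathtt{h}(\mathfrak{a})$, and Theorem~\ref{ct0t1} for uniqueness of the generic point. Where you genuinely depart from the paper is in the necessity half, together with the preliminary normalisation $\mathfrak{a}=\mathtt{k}(C)$, and this departure is an improvement rather than a detour. The paper's necessity argument takes an \emph{arbitrary} ideal $\mathfrak{a}$ with $\mathtt{h}(\mathfrak{a})$ irreducible, produces the generic point $\mathfrak{b}$, notes $\mathfrak{a}\subseteq\mathfrak{b}$, and then writes $\overline{\{\mathfrak{a}\}}\subseteq\overline{\{\mathfrak{b}\}}=\mathtt{h}(\mathfrak{a})$; this step silently treats $\mathfrak{a}$ as a point of $\mathtt{X}(R)$ (otherwise $\overline{\{\mathfrak{a}\}}$ is undefined), and the claimed inclusion also runs against the order-reversal $\mathfrak{a}\subseteq\mathfrak{b}\Rightarrow\mathtt{h}(\mathfrak{b})\subseteq\mathtt{h}(\mathfrak{a})$. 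Your $\mathtt{Spc}(\mathds{Z})$ example with $\mathfrak{a}=4\mathds{Z}$ shows this is not cosmetic: under the literal ``every representative'' reading, the necessity direction of the theorem is false, so the statement can only be about the kernel representative $\mathfrak{a}=\mathtt{k}(C)$ (equivalently, about the existence of \emph{some} representative lying in its own hull), exactly as you set it up. Your necessity proof --- identifying the generic point with the kernel via $\mathtt{k}(C)=\mathtt{kh}(\mathfrak{s})=\sqrt[X]{\mathfrak{s}}=\mathfrak{s}$, using Proposition~\ref{hrx}(ii) --- is the correct replacement for the paper's flawed step, and together with Proposition~\ref{hrx}(iii) (which guarantees $C=\mathtt{h}(\mathtt{k}(C))$) it makes both directions watertight. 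In short: what your version buys is a well-posed statement and a valid necessity argument; what it costs is only the bookkeeping you describe.
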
 

\begin{proof}
\textit{Necessity}. Suppose $\mathrm{X}(R)$ is a sober space and $\mathrm{h}(\mathfrak{a})$ is a non-empty irreducible subbasic closed subset of $\mathrm{X}(R)$. Then there exists $\mathfrak{b}\in \mathrm{X}(R)$ such that 
$\mathrm{h}(\mathfrak{a})=\overline{\{\mathfrak{b}\}}=\mathrm{h}(\mathfrak{b}).$ Then by Proposition \ref{hrx}(\ref{axa}) we have $\mathfrak{b}=\sqrt[\mathrm X]{\mathfrak a}\in \mathrm{X}(R)$. 
 
\textit{Sufficiency}.  Let every non-empty irreducible subbasic  closed set $\mathrm{h}(\mathfrak{a})$ contains $\sqrt[\mathrm X]{\mathfrak a}$ and let $C$ be an irreducible closed subset of $\mathrm X(R)$. By definition, $C=\bigcap_{i\in I}E_i$, where $E_i$ is a finite union of sets of the type $\mathrm{h}(\mathfrak{a})$, for suitable ideals $\mathfrak a$ of $R$. Since $C$ is irreducible, for every $i\in I$ there exists an ideal $\mathfrak a_i$ of $R$ such that $C\subseteq \mathrm{h}(\mathfrak{a}_i)\subseteq E_i$ and thus, if $\mathfrak c= \sum_{i\in I}\mathfrak a_i$, then we have  $C=\bigcap_{i\in I}\mathrm{h}(\mathfrak a_i)=\mathrm{h}(\mathfrak c)=\mathrm{h}(\sqrt[\mathrm X]{\mathfrak c})$. By assumption, $\sqrt[\mathrm X]{\mathfrak c}\in \mathrm X(R)$ and thus $C=\overline{\{\sqrt[\mathrm X]{\mathfrak c}\}}$.  
\end{proof}


  
\begin{proposition}\label{sirrs} 
 The ideal space $\mathrm{Irs}(R)$ is sober.
\end{proposition}
  
\begin{proof} 
Since $\mathrm{Idl}(R)$ is a multiplicative lattice with product as intersection, every strongly irreducible ideal becomes a prime element of this lattice and hence by \cite[Lemma 2.6]{FFJ22}, $\mathrm{Irs}(R)$ is sober.  
\end{proof}
  
Since every non-empty irreducible closed subset $\mathrm{h}(\mathfrak{a})$ of $\mathrm{Prp}(R)$ contains $\mathfrak{a},$ the ideal space $\mathrm{Prp}(R)$ is also sober. From here, it is natural to ask which ideal spaces are spectral in the sense of \cite{H69}; and that will be considered in \cite{FGS22}.   

\subsection{Connectedness}\label{comconn}  
    
We wish to present a disconnectivity result that bears resemblance to the fact that if $\mathrm{Spec}(R)$ is disconnected, then the ring has a proper idempotent element (see \cite{B72}, Section 4.3, Corollary 2). Towards that end, we introduce the following notion of  disconnectivity.

\begin{definition}\label{sconn}
 We say a closed subbase $\mathcal{S}$ of a topological space $Y$ \emph{strongly disconnects} $Y$  if there exist two non-empty subsets $A,$ $B\in\mathcal{S}$ such that $Y=A\cup B$ and $A\cap B=\emptyset$.
\end{definition}  

It is clear that if some closed subbase strongly disconnects  a  topological space, then the space is disconnected. Also, if a space is disconnected, then some closed subbase (for instance the collection of all its closed subspaces) strongly disconnects it. But this does not seem to imply that every closed subbase strongly disconnects it. We however have the following.

\begin{theorem}\label{th1}
 If a quasi-compact space is disconnected, then every closed base that is closed under finite intersections strongly disconnects the space.
\end{theorem}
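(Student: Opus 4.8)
The plan is to show that disconnectedness of a quasi-compact space $Y$ produces a clopen set, and then use quasi-compactness together with closure under finite intersections to express this clopen set and its complement as finite intersections of base elements that partition $Y$. Since $Y$ is disconnected, there is a proper non-empty clopen subset $U\subseteq Y$; write $V=Y\setminus U$, so both $U$ and $V$ are non-empty, closed, and $Y=U\cup V$ with $U\cap V=\emptyset$. Let $\mathcal{B}$ be a closed base closed under finite intersections. The goal is to find $A,B\in\mathcal{B}$ with $Y=A\cup B$ and $A\cap B=\emptyset$; the natural candidates will be $A=U$ and $B=V$, provided I can show each of $U$ and $V$ actually belongs to $\mathcal{B}$.

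\textbf{First} I would observe that because $\mathcal{B}$ is a closed base, every closed set — in particular $U$ and $V$ — is an intersection of members of $\mathcal{B}$. So write $U=\bigcap_{i\in I}B_i$ and $V=\bigcap_{j\in J}C_j$ with all $B_i,C_j\in\mathcal{B}$. Since $U\cap V=\emptyset$, the combined family $\{B_i\}_{i\in I}\cup\{C_j\}_{j\in J}$ has empty intersection. \textbf{Now} I invoke quasi-compactness in its finite-intersection-property form: a family of closed sets with empty intersection has a finite subfamily with empty intersection. Thus there are finitely many indices giving $\bigl(\bigcap_{i\in I_0}B_i\bigr)\cap\bigl(\bigcap_{j\in J_0}C_j\bigr)=\emptyset$. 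Because $\mathcal{B}$ is closed under finite intersections, the two sets $A':=\bigcap_{i\in I_0}B_i$ and $B':=\bigcap_{j\in J_0}C_j$ both lie in $\mathcal{B}$, and they are disjoint with $A'\supseteq U$, $B'\supseteq V$.

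\textbf{The hard part} is passing from the disjoint pair $A',B'\in\mathcal{B}$ (which satisfy $A'\cap B'=\emptyset$ and $A'\cup B'\supseteq U\cup V=Y$, hence $A'\cup B'=Y$) to confirming this genuinely strongly disconnects $Y$ in the sense of Definition \ref{sconn}: I must check $A'$ and $B'$ are both non-empty. Since $A'\supseteq U\neq\emptyset$ and $B'\supseteq V\neq\emptyset$, both are non-empty, and since $A'\cup B'=Y$ while $A'\cap B'=\emptyset$, in fact $A'=U$ and $B'=V$ exactly. So the pair $A=A'$, $B=B'$ witnesses that $\mathcal{B}$ strongly disconnects $Y$. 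The only subtlety to handle carefully is the quasi-compactness step: I should phrase it as the finite-intersection-property characterisation so that the finitely many base elements can be intersected and — crucially — stay inside $\mathcal{B}$, which is precisely where the hypothesis "closed under finite intersections" is used and cannot be dropped.
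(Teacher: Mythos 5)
Your proof is correct and follows essentially the same route as the paper's: write each of the two disjoint closed pieces as an intersection of base elements, apply quasi-compactness (in finite-intersection-property form) to the combined family to extract a finite disjoint pair, and use closure under finite intersections to keep the two resulting sets inside $\mathcal{B}$, noting they are non-empty and cover $Y$ because they contain the original pieces. The only cosmetic differences are that the paper phrases the dichotomy via two non-empty disjoint closed sets rather than a clopen set and its complement, and it includes a distributive-law computation that your argument (like the paper's own conclusion, in fact) does not actually need.
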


\begin{proof} 
Suppose that $Y$ is a quasi-compact disconnected space, and let $\mathcal{B}$ 	be a closed base for $Y$ such that $\mathcal{B}$ is closed under finite intersections. Since $Y$ is disconnected, there are non-empty closed sets $A$ and $B$ such that $Y=A\cup B$ and $A\cap B=\emptyset$. Find families $\{A_{ i}\mid i\in I\}\subseteq \mathcal{B}$ and $\{B_{ j}\mid j\in J\}\subseteq\mathcal{B}$ such that $A=\bigcap_{ i}\, A_{ i}$ and $B={\bigcap_{ j}}\,B_{ j}$. Then, by the infinite  distributive laws in the Boolean algebra of subsets of any set,  
$$Y =A\cup B= \left({\bigcap_{ i}}\,A_{ i}\right)\cup \left({\bigcap_{ j}}\,B_{ j}\right)= {\bigcap_{ i,j}}\left(A_{ i}\cup B_{ j}\right),
$$
 
which implies that $A_{ i}\cup B_{ j}=Y$ for any pair $(i,j)\in I\times J$. On the other hand, the equality $A\cap B=\emptyset$ implies that  $\left({\bigcap_{ i}}\,A_{ i}\right)\cap \left({\bigcap_{ j}}B_{ j} \right)=\emptyset$, and hence, since $Y$ is quasi-compact, there are finitely many indices $i_{ 1},\dots, i_{ n}$ in $I$  and finitely many indices $j_{ 1},\dots, j_{ m}$ in $J$ such that
$$
\left(A_{ i_1}\cap \dots \cap A_{ i_n}\right) \cap \left(B_{ j_1}\cap\dots\cap B_{ j_m}\right)=\emptyset.
$$
Put   
$
\hat{A}=A_{ i_1}\cap \dots \cap A_{ i_n}$ and  $\hat{B}=B_{ j_1}\cap\dots\cap B_{ j_m},
$
and observe that, by the hypothesis on $\mathcal{B}$, both $\hat{A}$ and $\hat{B}$ belong to $\mathcal{B}$. Furthermore, since neither $A$ nor $B$ is empty, neither  $\hat{A}$ nor $\hat{B}$ is empty because $A\subseteq\hat{A}$ and $B\subseteq \hat{B}$. Therefore $\hat{A}$ and $\hat{B}$ are non-empty members of $\mathcal{B}$ such that $\hat{A}\cap\hat B=\emptyset$ and $Y=\hat A\cup \hat B$; the latter because $Y=A\cup B$. It follows therefore that $\mathcal{B}$ strongly disconnects $Y$.
\end{proof}

This result naturally leads one to ask when the base $\mathcal{B}_{\mathrm{h}}$ generated by the  subbase $\mathcal{S}_{\mathrm{h}}$ is closed under binary intersections. It turns out that it always has this property. A closed set $A$ belongs to $\mathcal{B}_{\mathrm{h}}$ if and only if it is expressible as
$
A= \mathrm{h}(\mathfrak a_{ 1})\cup\dots\cup \mathrm{h}(\mathfrak a_{ n}),
$
for some finitely many ideals $\mathfrak{a}_{ 1}, \dots,\mathfrak{a}_{ n}$ of $R$. 

\begin{lemma}\label{lem1}
 For any ideal space $\mathrm{X}(R)$, the closed base $\mathcal{B}_{\mathrm{h}}$ is closed under binary intersections.
\end{lemma}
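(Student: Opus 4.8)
The plan is to show that the intersection of two basic closed sets is again a basic closed set by reducing everything to the fundamental identity from Proposition \ref{muls}(\ref{insum}), namely $\mathtt{h}(\mathfrak{a})\cap\mathtt{h}(\mathfrak{b})=\mathtt{h}(\mathfrak{a}+\mathfrak{b})$. Recall that a set $A$ lies in $\mathcal{B}_{\mathtt{h}}$ precisely when it is a finite union $A=\mathtt{h}(\mathfrak{a}_1)\cup\dots\cup\mathtt{h}(\mathfrak{a}_m)$ of subbasic closed sets. So I would take two arbitrary members of $\mathcal{B}_{\mathtt{h}}$, say $A=\bigcup_{i=1}^{m}\mathtt{h}(\mathfrak{a}_i)$ and $B=\bigcup_{j=1}^{n}\mathtt{h}(\mathfrak{b}_j)$, and compute $A\cap B$ directly.

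First I would distribute the intersection over the two unions using the distributive law for sets, obtaining
\[
A\cap B=\left(\bigcup_{i=1}^{m}\mathtt{h}(\mathfrak{a}_i)\right)\cap\left(\bigcup_{j=1}^{n}\mathtt{h}(\mathfrak{b}_j)\right)=\bigcup_{i=1}^{m}\bigcup_{j=1}^{n}\bigl(\mathtt{h}(\mathfrak{a}_i)\cap\mathtt{h}(\mathfrak{b}_j)\bigr).
\]
Then for each individual pair I would invoke Proposition \ref{muls}(\ref{insum}) (in its binary form) to rewrite each intersection as a single subbasic closed set, $\mathtt{h}(\mathfrak{a}_i)\cap\mathtt{h}(\mathfrak{b}_j)=\mathtt{h}(\mathfrak{a}_i+\mathfrak{b}_j)$. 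Substituting this back yields
\[
A\cap B=\bigcup_{i=1}^{m}\bigcup_{j=1}^{n}\mathtt{h}(\mathfrak{a}_i+\mathfrak{b}_j),
\]
which is a finite union (indexed by the $mn$ pairs $(i,j)$) of subbasic closed sets, and is therefore by definition a member of $\mathcal{B}_{\mathtt{h}}$.

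This argument is essentially a routine computation once the right identity is in hand, so I do not anticipate a genuine obstacle. The only point requiring a little care is the interaction with the empty set: if either $A$ or $B$ is empty, or if some $\mathtt{h}(\mathfrak{a}_i+\mathfrak{b}_j)$ is empty, one must confirm the result still lies in $\mathcal{B}_{\mathtt{h}}$. Since $\emptyset=\mathtt{h}(R)$ is itself a subbasic closed set (by Proposition \ref{muls}(\ref{hkd})), the empty set already belongs to $\mathcal{B}_{\mathtt{h}}$, so these degenerate cases cause no trouble and the finite-union description is preserved throughout. Hence $\mathcal{B}_{\mathtt{h}}$ is closed under binary intersections, as claimed.
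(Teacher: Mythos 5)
Your proof is correct and is essentially identical to the paper's: both distribute the intersection of the two finite unions over all $mn$ pairs and then collapse each $\mathtt{h}(\mathfrak{a}_i)\cap\mathtt{h}(\mathfrak{b}_j)$ to $\mathtt{h}(\mathfrak{a}_i+\mathfrak{b}_j)$ via Proposition \ref{muls}(\ref{insum}). Your extra remark on the empty-set degenerate cases is a harmless (and sound) addition that the paper omits.
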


\begin{proof}
Let $A$ and $B$ be elements of $\mathcal{B}_{\mathrm{h}}$, and pick finitely many ideals $\mathfrak{a}_{ 1}, \dots,\mathfrak{a}_{ n}$ and  $\mathfrak{b}_{ 1}, \dots,\mathfrak{b}_{ m}$ of $R$ such that 
$
A= \mathrm{h}(\mathfrak{a}_{ 1})\cup\dots\cup \mathrm{h}(\mathfrak{a}_{ n})$
and $B= \mathrm{h}(\mathfrak{b}_{ 1})\cup\dots\cup \mathrm{h}(\mathfrak{b}_{ m}).
$
Then 
$$
A\cap B=  \left({\bigcup_{ i\,=1}^n}\mathrm{h}(\mathfrak{a}_{ i})\right)\cap \left({\bigcup_{ j\,=1}^m}\mathrm{h}(\mathfrak{b}_{ j})\right)= {\bigcup_{ i\,=1}^n}{\bigcup_{ j\,=1}^m}\left(\mathrm{h}(\mathfrak{a}_{ i})\cap\mathrm{h}(\mathfrak{b}_{ j})\right)= {\bigcup_{ i\,=1}^n}{\bigcup_{ j\,=1}^m}\mathrm{h}(\mathfrak{a}_{ i}+\mathfrak{b}_{ j}),
$$  
showing that $A\cap B\in\mathcal{B}_{\mathrm{hk}}$.	
\end{proof}

In light of Theorem~\ref{th1}, we have the following corollary.

\begin{corollary}\label{cor1}
 If an ideal space $\mathrm{X}(R)$ is quasi-compact, then  for $\mathrm{X}(R)$ to be disconnected, it is necessary and sufficient that $\mathcal{B}_{\mathrm{h}}$ strongly disconnects $\mathrm{X}(R)$. 
\end{corollary}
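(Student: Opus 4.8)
The plan is to apply Theorem~\ref{th1} together with Lemma~\ref{lem1}, treating $\mathcal{B}_{\mathtt{h}}$ as the closed base that will do the work. First I would observe that $\mathcal{B}_{\mathtt{h}}$ is, by its very definition, a closed base for the ideal space $\mathtt{X}(R)$: it consists of all finite unions of subbasic closed sets $\mathtt{h}(\mathfrak{a})$, and finite unions of subbasic closed sets always form a base for the closed sets of the topology generated by a closed subbase. By Lemma~\ref{lem1}, this base is closed under binary intersections, and hence (by an immediate induction) under all finite intersections. Thus $\mathcal{B}_{\mathtt{h}}$ is a closed base for $\mathtt{X}(R)$ that is closed under finite intersections, which is precisely the hypothesis needed to invoke Theorem~\ref{th1}.

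For the \emph{sufficiency} direction I would argue as follows. Suppose $\mathcal{B}_{\mathtt{h}}$ strongly disconnects $\mathtt{X}(R)$. Then by Definition~\ref{sconn} there are non-empty sets $A, B \in \mathcal{B}_{\mathtt{h}}$ with $\mathtt{X}(R) = A \cup B$ and $A \cap B = \emptyset$. Since members of $\mathcal{B}_{\mathtt{h}}$ are closed sets, $A$ and $B$ witness a disconnection of $\mathtt{X}(R)$ directly; this direction uses neither quasi-compactness nor the intersection-closure of the base, and in fact follows from the general remark after Definition~\ref{sconn} that any closed subbase which strongly disconnects a space forces it to be disconnected.

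For the \emph{necessity} direction, which is where the quasi-compactness hypothesis is genuinely used, I would assume $\mathtt{X}(R)$ is disconnected. Since $\mathtt{X}(R)$ is quasi-compact by hypothesis and $\mathcal{B}_{\mathtt{h}}$ is a closed base closed under finite intersections (by Lemma~\ref{lem1}), Theorem~\ref{th1} applies verbatim and yields that $\mathcal{B}_{\mathtt{h}}$ strongly disconnects $\mathtt{X}(R)$. This completes the equivalence.

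The proof is essentially a bookkeeping assembly of results already in hand, so there is no serious obstacle; the only point requiring care is the explicit verification that $\mathcal{B}_{\mathtt{h}}$ qualifies as a closed \emph{base} (not merely a subbase) and that Lemma~\ref{lem1}'s binary-intersection closure upgrades to finite-intersection closure, so that Theorem~\ref{th1}'s hypotheses are met on the nose. Given the heavy lifting done by Theorem~\ref{th1} and Lemma~\ref{lem1}, the corollary follows in a few lines. I would write it as: \begin{proof} The necessity is immediate from Theorem~\ref{th1}, since by Lemma~\ref{lem1} the closed base $\mathcal{B}_{\mathtt{h}}$ is closed under finite intersections and $\mathtt{X}(R)$ is quasi-compact. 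For the sufficiency, if $\mathcal{B}_{\mathtt{h}}$ strongly disconnects $\mathtt{X}(R)$, then by Definition~\ref{sconn} there are non-empty closed sets $A, B \in \mathcal{B}_{\mathtt{h}}$ with $\mathtt{X}(R) = A \cup B$ and $A \cap B = \emptyset$, so $\mathtt{X}(R)$ is disconnected. \end{proof}
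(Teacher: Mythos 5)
Your proof is correct and takes essentially the same route as the paper: Corollary~\ref{cor1} is presented there as an immediate consequence of Theorem~\ref{th1} combined with Lemma~\ref{lem1}, which is exactly the assembly you carry out. The details you make explicit --- that $\mathcal{B}_{\mathtt{h}}$ is a genuine closed base, that binary-intersection closure upgrades to finite-intersection closure, and that the sufficiency direction follows trivially from Definition~\ref{sconn} --- are precisely what the paper leaves implicit.
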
   

Here is the result we alluded to above. 

\begin{proposition}\label{pr1}  
 Suppose $R$ has zero Jacobson radical and $\mathrm{X}(R)$ contains all maximal ideals of $R.$ If the subbase $\mathcal{S}_{\mathrm{h}}$ strongly  disconnects  $\mathrm{X}(R)$, then  $R$ has a non-trivial idempotent element.
\end{proposition}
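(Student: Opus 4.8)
The plan is to unpack what it means for the subbase $\mathcal{S}_{\mathtt{h}}$ to strongly disconnect $\mathtt{X}(R)$ and translate the topological splitting into an algebraic statement about ideals. By Definition \ref{sconn}, there exist non-empty subbasic closed sets $\mathtt{h}(\mathfrak{a})$ and $\mathtt{h}(\mathfrak{b})$ with $\mathtt{X}(R)=\mathtt{h}(\mathfrak{a})\cup \mathtt{h}(\mathfrak{b})$ and $\mathtt{h}(\mathfrak{a})\cap \mathtt{h}(\mathfrak{b})=\emptyset$. The first thing I would do is feed these two equalities into Proposition \ref{muls}. The disjointness together with \ref{muls}(\ref{insum}) gives $\mathtt{h}(\mathfrak{a}+\mathfrak{b})=\mathtt{h}(\mathfrak{a})\cap\mathtt{h}(\mathfrak{b})=\emptyset$; since $\mathtt{X}(R)$ contains all maximal ideals, the partition of unity property (Proposition following Definition \ref{fulle}) holds, so $\mathtt{h}(\mathfrak{a}+\mathfrak{b})=\emptyset$ forces $\mathfrak{a}+\mathfrak{b}=R$. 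On the other side, $\mathtt{X}(R)=\mathtt{h}(\mathfrak{a})\cup\mathtt{h}(\mathfrak{b})$, combined with the inclusion $\mathtt{h}(\mathfrak{a})\cup\mathtt{h}(\mathfrak{b})\subseteq\mathtt{h}(\mathfrak{a}\cap\mathfrak{b})\subseteq\mathtt{h}(\mathfrak{ab})$ from \ref{muls}(\ref{ihs}), shows $\mathtt{h}(\mathfrak{ab})=\mathtt{X}(R)=\mathtt{h}(\mathfrak{o})$, so $\mathfrak{ab}$ is contained in every ideal of $\mathtt{X}(R)$, hence in every maximal ideal.

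Next I would invoke the hypothesis that $R$ has zero Jacobson radical. Since $\mathfrak{ab}$ lies in every maximal ideal of $R$, it lies in the Jacobson radical, which is $\mathfrak{o}$; therefore $\mathfrak{ab}=\mathfrak{o}$, i.e.\ $\mathfrak{a}\mathfrak{b}=0$. Now I have the two algebraic facts $\mathfrak{a}+\mathfrak{b}=R$ and $\mathfrak{a}\mathfrak{b}=0$, which is exactly the comaximal-with-zero-product situation that produces idempotents by the Chinese Remainder splitting. Writing $1=e+f$ with $e\in\mathfrak{a}$ and $f\in\mathfrak{b}$, I would compute $e=e\cdot 1=e(e+f)=e^2+ef$, and since $ef\in\mathfrak{a}\mathfrak{b}=0$ we get $e^2=e$, so $e$ is idempotent (and likewise $f=1-e$).

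The remaining step is to verify that $e$ is non-trivial, i.e.\ $e\neq 0$ and $e\neq 1$. This is where I expect the only real subtlety, and it is where the non-emptiness of both $\mathtt{h}(\mathfrak{a})$ and $\mathtt{h}(\mathfrak{b})$ must be used. If $e=0$ then $1=f\in\mathfrak{b}$, forcing $\mathfrak{b}=R$ and hence $\mathtt{h}(\mathfrak{b})=\emptyset$ by \ref{muls}(\ref{hkd}), contradicting that $\mathtt{h}(\mathfrak{b})$ is non-empty; symmetrically $e=1$ would give $f=0$, so $1\in\mathfrak{a}$, forcing $\mathfrak{a}=R$ and $\mathtt{h}(\mathfrak{a})=\emptyset$, again a contradiction. (More carefully, one should check that $e\in\mathfrak{a}$ and $f\in\mathfrak{b}$ together with $\mathfrak{a}\mathfrak{b}=0$ already give $\mathfrak{a}=\langle e\rangle$ and $\mathfrak{b}=\langle f\rangle$ via $a=a(e+f)=ae+af=ae$ for $a\in\mathfrak{a}$, using $af\in\mathfrak{a}\mathfrak{b}=0$; this justifies identifying the vanishing of $e$ with $\mathfrak{a}$ being improper.) Thus $e$ is a non-trivial idempotent, completing the argument. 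The main obstacle is not any single hard computation but rather keeping straight which hypothesis supplies which conclusion: the maximal-ideal hypothesis gives comaximality, the zero-Jacobson-radical hypothesis collapses $\mathfrak{ab}$ to $0$, and the non-emptiness of the two subbasic pieces is precisely what guarantees non-triviality.
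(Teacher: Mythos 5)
Your proposal is correct and follows essentially the same route as the paper's own proof: derive $\mathfrak{a}+\mathfrak{b}=R$ from $\mathtt{h}(\mathfrak{a}+\mathfrak{b})=\mathtt{h}(\mathfrak{a})\cap\mathtt{h}(\mathfrak{b})=\emptyset$ via the maximal-ideal hypothesis, derive $\mathfrak{a}\mathfrak{b}=\mathfrak{o}$ from $\mathtt{h}(\mathfrak{a}\mathfrak{b})\supseteq\mathtt{h}(\mathfrak{a})\cup\mathtt{h}(\mathfrak{b})=\mathtt{X}(R)$ via the zero Jacobson radical, split $1=e+f$ to get the idempotent, and use the non-emptiness of both hulls to rule out $e=0,1$. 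Your parenthetical observation that $\mathfrak{a}=\langle e\rangle$ and $\mathfrak{b}=\langle f\rangle$ is not needed for the statement, and it is exactly what the paper records separately in Remark \ref{rem1}.
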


\begin{proof}
Let $\mathfrak{a}$ and $\mathfrak{b}$ be ideals of $R$ such that
$
\mathrm{h}(\mathfrak{a})\cap\mathrm{h}(\mathfrak{b}) =\emptyset,$ $\mathrm{h}(\mathfrak{a})\cup\mathrm{h}(\mathfrak{b}) =\mathrm{X}(R),$ and $\mathrm{h}(\mathfrak{a})\ne\emptyset \ne \mathrm{h}(\mathfrak{b}). $
Since $\mathrm{h}(\mathfrak{a})\cap\mathrm{h}(\mathfrak{b})=\mathrm{h}(\mathfrak{a} +\mathfrak{b})$, we therefore have $\mathrm{h}(\mathfrak{a}+\mathfrak{b})=\emptyset$ and hence $\mathfrak{a}+\mathfrak{b} =R$ because $\mathrm{X}(R)$ contains all maximal ideals of $R$. On the other hand, since $\mathrm{h}$ is a decreasing map, 
$
\mathrm{h}(\mathfrak{a}\mathfrak{b})\supseteq \mathrm{h}(\mathfrak{a})\cup\mathrm{h}(\mathfrak{b})=\mathrm{X}(R),
$
which then implies that $\mathfrak{a}\mathfrak{b}$ is contained in every maximal ideal of $R$, and is therefore the zero ideal since $R$ has zero Jacobson radical. Note that the condition $\mathrm{h}(\mathfrak{a})\ne\emptyset \ne \mathrm{h}(\mathfrak{b})$ implies that neither $\mathfrak{a}$ nor $\mathfrak{b}$ is the entire ring. So the 
equality $\mathfrak{a}+\mathfrak{b}=R$ furnishes non-zero elements $a\in\mathfrak{a}$ and $b\in\mathfrak{b}$ such that $a+b=1$. Since  $ab=0$ as $ab\in\mathfrak{a}\mathfrak{b}=\mathfrak{o}$, we therefore have
$
a= a(a+b)=a^{ 2}+ab =a^{ 2},
$ 
showing that $a$ is a non-zero  idempotent element in $R$. Since $\mathfrak{a}\ne R$,  $a\ne 1$, and hence $a$ is a non-trivial idempotent element of $R$.
\end{proof}

\begin{remark}\label{rem1}
 The proof of the foregoing result actually achieves more than what is asserted in the statement of the proposition. Let us explain.  
\begin{enumerate}[\upshape (i)]
\item
Since $\mathfrak{a}\mathfrak{b}=\mathfrak{o}$, we have that $\mathfrak{a} \cap\mathfrak  b=\mathfrak{o}$ and so $\mathfrak{a}$ is a direct summand, and therefore  generated by an idempotent element. 
\item
Following from (i), let $e$ be an idempotent element such that $\mathfrak{a}=\langle e\rangle$ and $f$ be an idempotent element such that $\mathfrak{b}=\langle f\rangle$. Then find elements $r,s\in R$ such that $1=re +sf$. Since $ef=0$, we therefore have $e=re$ and $f=sf$, whence we deduce that $f=1-e$. 
\end{enumerate} 
\end{remark}

In light of the discussion in Remark~\ref{rem1},  the conclusion in the statement of the proposition says there is a non-trivial idempotent element $e\in R$ such that
$
\mathrm{h}(\langle e\rangle)\cap\mathrm{h}(\langle 1-e\rangle) =\emptyset,$ $\mathrm{h}(\mathfrak \langle e\rangle)\cup\mathrm{h}(\langle 1-e\rangle) =\mathrm{X}(R),$ and $\mathrm{h}(\mathfrak \langle e\rangle)\ne\emptyset \ne \mathrm{h}(\mathfrak \langle 1-e\rangle).
$
This observation enables us to give an example of a ring $R$   satisfying all the hypotheses in Proposition~\ref{pr1}, with a non-trivial idempotent element but not strongly disconnected by $\mathcal{S}_{\mathrm{h}}$. 

\begin{example}\label{ex1}
 Let $R=\mathds{Z}\times \mathds{Z}$. Then $R$ has zero Jacobson radical. The elements $(1, 0)$ and $(0,1)$ are non-trivial idempotent elements in $R$, and are the only non-trivial idempotent elements in $R$. The collection $\mathrm{Prp}(R)$ contains all maximal ideals of $R$. We show that $\mathrm{Prp}(R)$ is not strongly disconnected by its subbase $\mathcal{S}_{\mathrm{hk}}$. For brevity, write $\mathfrak{a}$ and $\mathfrak{b}$ for the ideals of $R$ generated by $(1,0)$ and $(0,1)$, respectively. By the discussion preceding the example, if $R$ were strongly disconnected by the subbase $\mathcal{S}_{\mathrm{hk}}$, we would have (among other things) $\mathrm{h}(\mathfrak{a}) \cup \mathrm{h}(\mathfrak{b})=\mathrm{Prp}(R)$.  But this is not the case because, for instance, the ideal generated by $(2,2)$ belongs  neither to $\mathrm{h}(\mathfrak{a})$ nor $\mathrm{h}(\mathfrak{b})$. Thus, the converse of Proposition~\ref{pr1} fails.
\end{example}

Another consequence of Theorem \ref{irrc} is the following sufficient condition for an ideal space to be connected. 
 
\begin{theorem}\label{conis}
 If a spectrum $\mathrm{X}(R)$ contains the zero ideal, then the ideal space $\mathrm{X}(R)$ is connected.
\end{theorem}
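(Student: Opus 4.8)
The plan is to prove that $\mathtt{X}(R)$ is connected by showing that the zero ideal $\mathfrak{o}$ is a generic point whose closure is the whole space, so that $\mathtt{X}(R)$ cannot be split into two disjoint non-empty closed sets. The key observation is that $\mathfrak{o}\in\mathtt{X}(R)$ by hypothesis, and since $\mathfrak{o}\subseteq\mathfrak{s}$ for every $\mathfrak{s}\in\mathtt{X}(R)$, we have $\mathfrak{o}\in\mathtt{h}(\mathfrak{o})$ and in fact $\mathtt{h}(\mathfrak{o})=\mathtt{X}(R)$ by Proposition~\ref{muls}(\ref{hkd}). Thus the subbasic closed set associated with $\mathfrak{o}$ is the entire space and contains its own defining ideal, putting us squarely in the situation covered by Theorem~\ref{irrc}.

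First I would invoke Theorem~\ref{irrc}: since $\mathfrak{o}\in\mathtt{h}(\mathfrak{o})$, that theorem tells us $\mathtt{h}(\mathfrak{o})=\overline{\{\mathfrak{o}\}}$ is irreducible. But $\mathtt{h}(\mathfrak{o})=\mathtt{X}(R)$, so the whole space $\mathtt{X}(R)$ equals $\overline{\{\mathfrak{o}\}}$ and is irreducible. The final step is then purely topological: an irreducible space is connected. Indeed, if $\mathtt{X}(R)=A\cup B$ with $A,B$ non-empty, closed, and disjoint, then their complements are non-empty open sets whose intersection is empty, contradicting irreducibility (which says any two non-empty open sets meet). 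Hence no such decomposition exists and $\mathtt{X}(R)$ is connected.

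The main conceptual point is recognising that the hypothesis $\mathfrak{o}\in\mathtt{X}(R)$ is exactly what is needed to apply Theorem~\ref{irrc} to the maximal subbasic closed set $\mathtt{h}(\mathfrak{o})=\mathtt{X}(R)$; once that is seen, the argument is immediate. I do not anticipate a genuine obstacle here, since connectedness follows formally from irreducibility. The only care required is to state cleanly that $\mathtt{h}(\mathfrak{o})$ being the whole space and irreducible forces connectedness, rather than merely establishing irreducibility of a proper closed subset. An alternative, slightly more elementary route avoiding explicit mention of irreducibility would be to argue directly: if $\mathtt{X}(R)=A\cup B$ with $A,B$ disjoint closed sets, then since $\mathfrak{o}$ lies in one of them, say $A$, and $A$ is closed hence contains $\overline{\{\mathfrak{o}\}}=\mathtt{X}(R)$, we get $A=\mathtt{X}(R)$ and $B=\emptyset$; either formulation completes the proof.
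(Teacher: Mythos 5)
Your proof is correct and follows exactly the paper's own argument: observe that $\mathfrak{o}\in\mathtt{X}(R)$ gives $\mathfrak{o}\in\mathtt{h}(\mathfrak{o})=\mathtt{X}(R)$, apply Theorem~\ref{irrc} to conclude the whole space is irreducible, and then use the fact that irreducibility implies connectedness. Your write-up simply spells out the details (including the elementary closure argument) that the paper leaves implicit.
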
  

\begin{proof}
We observe that  $\mathrm{X}(R)=\mathrm{h}(\mathfrak{o})$ and irreducibility implies connectedness. Now the claim follows from Theorem \ref{irrc}. 
\end{proof}

\begin{examples}
 Examples of connected ideal spaces are $\mathrm{Prp}(R)$, $\mathrm{Fgn}(R)$, $\mathrm{Prn}(R)$. In particular, $\mathrm{Prp}(\mathds{Z}\times \mathds{Z})$ considered in Example \ref{ex1} is also connected. If $R$ is an integral domain, then the ideal spaces $\mathrm{Spec}(R)$, $\mathrm{Spn}(R)$ are connected. 
\end{examples}

Note that the converse of Theorem \ref{conis} may not be true. A trivial example is  $\mathrm{Max}(R)$, with $R$ having only one maximal ideal and $R$ is not a field. A more non-trivial example is as follows. Recall that if a Tychonoff space $Y$ is compact, then $Y$ is homeomorphic to $\mathrm{Max}(C(Y)).$ So if we take
$Y$ to be the unit interval $[0,1],$ then $\mathrm{Max}(C(Y))$ is connected but the zero ideal is not a maximal ideal. 
 
\section*{Acknowledgement}
We are indebted to the anonymous referee for some valuable remarks and suggestions. The second author thanks Carmelo Antonio Finocchiaro for some fruitful suggestions.

\end{document}